\theoremstyle{definition}
\newtheorem{definition}{Definition}[section]
\newtheorem{remark}[definition]{Remark}
\newtheorem{example}[definition]{Example}
\theoremstyle{plain}
\newtheorem{proposition}[definition]{Proposition}
\newtheorem{theorem}[definition]{Theorem}
\newtheorem{lemma}[definition]{Lemma}
\title{Ergodic Decomposition}
\author{Sakshi Jain\textsuperscript{1}}
\address{1. Department of Mathematics, University of Delhi, New Delhi-110007}
\email{sjsakshi29jain@gmail.com}
\author{Shah Faisal\textsuperscript{2}}
\address{2.Berlin Mathematical School (BMS), Berlin, Germany and \linebreak International Center for Theoretical Physics (ICTP), Trieste, Italy}
\email{faisal@math.hu-berlin.de}
\thanks{\textsuperscript{1} We thank Stefano Luzzatto for suggesting this topic and for guiding us throughout this project and we thank Vilton Pinheiro for pointing out the possibility of the result that we have proved and for suggesting a strategy for it. Also we thank Oliver Butterley for his helpful suggestions and careful reading of the script.  
\\ \textsuperscript{2} Sakshi Jain was supported by CSIR- Junior Research Fellowship (File No.- 09/045(1522)/2017-EMR-I) of Goernment of India. Shah Faisal was supported by ICTP postgraduate Diploma Scholarship.}
\begin{document}

\begin{abstract}
Ergodic systems, being indecomposable are important part of the study of dynamical systems but if a system is not ergodic, it is natural to ask the following question:
\begin{quote}
	Is it possible to split it into ergodic systems in such a way that the study of the former reduces to the study of latter ones?
\end{quote}
Also, it will be interesting to see if the latter ones inherit some properties of the former one. This document answers this question for measurable maps defined on complete separable metric spaces with Borel probability measure, using the Rokhlin Disintegration Theorem.
\end{abstract}

\maketitle

\noindent
\section{Introduction and statements of results}

Let \(  (X, \mathcal A, \mu)  \) be a probability space where \(\mathcal A\) has a countable generator and   $\mathbb{P}$  a partition  of $X$ into measurable subsets. The basic question we address is the following:

\begin{quote}
	Is it possible to ``disintegrate''  \(  \mu  \) into ``conditional'' measures on the elements of the partition \(  \mathbb P  \)?
\end{quote}
Under certain conditions on the system under consideration, the answer to this question is affirmative. For instance, if \(f:X\to X\) is continuous and \(X\) is compact then every \(f-\)invariant measure is a convex combination of ergodic invariant measure, by Krylov-Bogolyubov Theorem \cite{W} together with Choquet's Theorem. However, if the map \(f:X\to X\) is not continuous and$/$or the space \(X\) is not compact then the set \(\mathcal M\) of all probability measures on the system under consideration may not be compact and in that case we are unable to apply the Choquet's Theorem.
In this document, we present a similar result for a relatively larger class of maps defined on a reasonably good space.
Moreover, we try to answer in here that, if \(  \mu  \) has some additional properties, such as being non-singular or invariant with respect to some measurable transformation \(  f: X \to X  \) can we construct such a disintegration so that the conditional measures inherit these properties?

The question of existence of disintegration of a measure on a Borel probability space has been addressed originally by Rokhlin in \cite{RO}. It is a key tool in proving the existence of ergodic decomposition in this document. Ergodic decomposition is quite fundamental and often quoted result but there does not exist a detailed presentation of it. Here, we have made an effort to address the topic in completely detailed manner. It should be mentioned that the ergodic decomposition for invariant measure has been proved in \cite{W}. Here we prove the existence of ergodic decomposition with reduced hypothesis and include various examples illustrating the main definitions and results. The proof is not particularly complicated but neither is it trivial, and this generalization may be useful in the study of ergodic properties of non-invariant measures. We believe that this document can be a reference for this topic.

To formulate the question of ergodic decomposition precisely,  we first define the following notions.
We can define a canonical projection
\[
\widehat{\tau}:X\rightarrow \mathbb{P} \quad \text{ by } \quad \widehat{\tau}(x)=P(x)
\]
where $P(x)\in \mathbb{P}$ is such that $x\in P(x)$. Then  we can define a sigma-algebra $\widehat{\mathcal{A}}$ of measurable sets on \(  \mathbb P  \) by
defining a subset $A\subset \mathbb{P}$ to be measurable if and only if $\widehat{\tau}^{-1}A$ is a measurable subset of $X$.  Notice that each element \(  P\in \mathbb P  \) belongs to $\widehat{\mathcal{A}}$.
We can then define the \emph{quotient measure} $\widehat{\mu} $ on $\mathbb{P}$ by letting
\begin{equation*}
\widehat{\mu}(A)=\mu(\widehat{\tau}^{-1}(A))
\end{equation*}
for all \( A\in \widehat{\mathcal{A}}\).

\subsection{Disintegration of measures}
We can now formally define the notion of disintegration of \(  \mu  \).

\begin{definition}[Disintegration of a measure]\label{dis}
	Given a partition \(  \mathbb P  \) of \(( X, \widehat{\mathcal A}, \mu )\) into measurable subsets, a  family of probability measures $\{\mu_P: P\in \mathbb{P}\}$ on $X$ is said to disintegrate $\mu$ with respect to $\mathbb{P}$ if the following hold:
	\begin{enumerate}
		\item $\mu_P(P)=1$ for $\widehat{\mu}$-almost every $P\in \mathbb{P}$.
		\item For every measurable subset $E$ of $X$, the map $P\rightarrow\mu_P(E)$ is measurable and
		\[
		\mu(E)=\int_{\mathbb{P}}\mu_P(E)d\widehat{\mu}(P).
		\]
	\end{enumerate}
	We call the measures \(  \mu_{P}  \) \emph{conditional measures} of \(  \mu  \) with respect to \(  \mathbb P  \).
\end{definition}
The first condition ensures that the conditional measures $\mu_{P}$ are mutually singular, and thus independent, in some sense. The measurability and boundedness of the map $P\to \mu_{P}(E)$ ensures that the integral in the definition of  $\mu$ exists.

\begin{remark} Notice that if $\mathbb{P}$ is finite or countable, or more generally if there exists a finite or countable set $ \{P_i\} $ of elements of \( \mathbb P  \) such that  \(  \mu (\cup P_{i})=1 \) (throwing away the measure zero elements of \(\mathbb P\)), the integral reduces to a  convex combination of $\mu_{P}$:
	\begin{equation*}
	\mu(E)=\int \mu_{P}(E)\ d \widehat{\mu}(P)
	=\sum\mu_{P_i}(E)\widehat{\mu}(P_i),=\sum \mu_{P_i}(E)\mu(P_i).
	\end{equation*}
	Notice that  $\Sigma \mu(P_i)=1$. In this case, we have in fact a very explicit form for the conditional measures which can be defined as
	\[
	\mu_{P_i}(E)=\frac{\mu (E\cap P_i)}{\mu(P_i)}
	\]
	for all measurable sets E.
\end{remark}

More interesting and non-trivial situations are when none of the partition elements of \(  \mathbb P  \) has positive measure.

\begin{example}
	Let \(\mathbb P\) be the partition of \([0,1]\) into singleton sets that is, \(\mathbb P =\{\{x\}: x\in [0,1]\}\) and \(m\) the Lebesgue measure on \( [0,1]\). Clearly, \(m(P)=0 \) for all \(P\in \mathbb P\).   For each \(  P_{x}\in\mathbb P  \) with \(  P_{x}=\{x\}  \), let \(  \mu_{P_{x}}:=\delta_{P_{x}}  \) where \(  \delta_{P_{x}}  \) is the Dirac-delta probability measure on the point \(  x  \).  We claim that the family \(  \{\mu_{P}\}  \) is a disintegration of \(  m  \) with respect to \(  \mathbb P  \).
	Indeed, for each
	\(P\in \mathbb P\) we clearly have \(\mu_{p}(P)=\delta_P(P)=1\). Also,   for any measurable set \(E\subset X\), the map \(P \to \mu_P(E)\) takes values in \(  \{0,1\}  \) and the pre-image of \(  1  \) is the collection of all partition elements \(  P_{x}\in \mathbb P  \) with \(  x\in E  \) which is therefore measurable by the measurability of \(  E  \) and the definition of the sigma-algebra on \(  \mathbb P  \) defined by the quotient map \(  \widehat \tau  \).  Therefore for any measurable set \(  E\subseteq X  \), we have
	\begin{equation*}
	\int_{\mathbb P} \mu_P(E)d\widehat{\mu}(P) = \int_{\mathbb P} \delta_P(E)d\widehat\mu(P) = \widehat\mu (\widehat\tau (E)) = m(E).
	\end{equation*}
	As a similar but slightly more general example,   let \(  X=  [-1,1]\)  with normalised Lebesgue measure \(m\),  the partition  \(\mathbb P=\{\{-x,x\},x\in(0,1]\} \cup \{0\}\). We claim that the family of probability measures \(\{\mu_P=\delta_x/2+\delta_{-x}/2: P=\{-x,x\}\}\cup \{\delta_0\}\) is the disintegration of \(m\) with respect to \(\mathbb P\). Observe that   \(\mu_P(P) = 1\) for all \(P\in \mathbb P\), by similar reasoning as above,  for measurable set \(E\subset X\) the map \(P\to \mu_P(E)\)  is measurable, and
	\begin{equation*}
	\int_{\mathbb P} \mu_P(E)d\widehat{\mu}(P) =   \frac{1}{2} \int_{\mathbb P} (\delta_x(E)+ \delta_{-x}(E))d\widehat{\mu}(P) = m(E).
	\end{equation*}
\end{example}

\subsection{Uniqueness of disintegration}\label{sec:unique}
With the formal definition of disintegration of a measure we can formalize the question of the existence and uniqueness of such a disintegration.

\begin{definition}[Uniqueness of Disintegration]
	A probability measure \(  \mu  \) is said to have a \emph{unique}
	disintegration with respect to the partition \(  \mathbb P  \) if given any two disintegrations $\{\mu_P:P\in \mathbb{P}\}$ and $\{{\mu}^{\prime}_P:P\in \mathbb{P}\}$, we have  $\mu_P={\mu}^{\prime}_P$ for $\widehat{\mu}$-almost every $P\in\mathbb{P}$.
\end{definition}

Both the existence and uniqueness of a disintegration of a measure \(  \mu  \) with respect to a partition \(  \mathbb P  \) are in general non-trivial.  The uniqueness however follows immediately (using the fact that our probability space has a countable generator) without any additional assumptions on the partition.

\begin{proposition}[Uniqueness of Disintegration\cite{W}]\label{thm:unique}
	Let \((X,\widehat{\mathcal A}, \mu)\) be a probability space such that the sigma-algebra \(\widehat{\mathcal A}\) has a countable generator and \(\mathbb P\) be a partition of \(X\), then the disintegration of \(\mu\) into conditional measures, if it exists, is unique.
\end{proposition}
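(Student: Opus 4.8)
The plan is to exploit the countable generator in order to collapse an uncountable family of almost-everywhere identities into a single one. First I would upgrade condition (2) of Definition \ref{dis} to a localized version: for every measurable $E\subseteq X$ and every measurable $A\subseteq\mathbb{P}$,
\[
\mu\bigl(E\cap\widehat{\tau}^{-1}(A)\bigr)=\int_{A}\mu_{P}(E)\,d\widehat{\mu}(P).
\]
To obtain this I would apply condition (2) to the measurable set $E\cap\widehat{\tau}^{-1}(A)$ and then split the resulting integral over $A$ and its complement. For $\widehat{\mu}$-almost every $P\in A$ we have $\mu_{P}(P)=1$ and $P\subseteq\widehat{\tau}^{-1}(A)$, so $\mu_{P}\bigl(E\cap\widehat{\tau}^{-1}(A)\bigr)=\mu_{P}(E)$; for $P\notin A$ the partition element $P$ is disjoint from $\widehat{\tau}^{-1}(A)$, and since $\mu_{P}$ is concentrated on $P$ the integrand vanishes. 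This localized identity is the technical point that makes the whole argument work.

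Next, because both given disintegrations $\{\mu_{P}\}$ and $\{\mu'_{P}\}$ satisfy the identity above, for each fixed measurable $E$ we get
\[
\int_{A}\mu_{P}(E)\,d\widehat{\mu}(P)=\int_{A}\mu'_{P}(E)\,d\widehat{\mu}(P)\quad\text{for every measurable }A\subseteq\mathbb{P}.
\]
The maps $P\mapsto\mu_{P}(E)$ and $P\mapsto\mu'_{P}(E)$ are bounded and measurable by condition (2), and their integrals agree over every measurable set, so by the standard fact that two integrable functions with equal integrals over all measurable sets coincide almost everywhere, we conclude $\mu_{P}(E)=\mu'_{P}(E)$ for $\widehat{\mu}$-almost every $P$. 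I would denote the corresponding exceptional $\widehat{\mu}$-null set by $N_{E}$.

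The main obstacle is that $N_{E}$ depends on $E$ and there are uncountably many measurable $E$, so one cannot simply intersect over all of them. This is exactly where the countable generator hypothesis is used. I would take a countable generating family $\{E_{n}\}$ of the sigma-algebra on $X$ and enlarge it to the still-countable $\pi$-system of its finite intersections, then set $N=\bigcup_{n}N_{E_{n}}$, a single $\widehat{\mu}$-null set. For every $P\notin N$ the probability measures $\mu_{P}$ and $\mu'_{P}$ agree on this generating $\pi$-system, and therefore, by the uniqueness part of the monotone class (Dynkin $\pi$--$\lambda$) theorem, they agree on the whole sigma-algebra of $X$. Hence $\mu_{P}=\mu'_{P}$ for $\widehat{\mu}$-almost every $P$, which is the desired conclusion; the crux of the proof is precisely this passage from a countable generating family to the full sigma-algebra while keeping the exceptional set null.
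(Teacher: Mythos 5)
Your proposal is correct and follows essentially the same route as the paper: the same localization identity $\mu(E\cap\widehat{\tau}^{-1}(A))=\int_{A}\mu_{P}(E)\,d\widehat{\mu}(P)$, the same deduction that the two conditional measures agree $\widehat{\mu}$-a.e.\ on each fixed $E$ (the paper just unfolds your ``equal integrals over all sets'' lemma explicitly via the sets where one measure exceeds the other), and the same final passage from a countable generating class to the full sigma-algebra by taking a single null union of exceptional sets.
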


Proposition \ref{thm:unique}   allows us to give an example of a measure which does not admit any disintegration with respect to a given partition.

\begin{example}\label{examplekhan}
	
	Let $X=S^1$ with Lebesgue measure $m$ and  $f:S^1\to S^1$ be an irrational rotation \(  f(x)= x+\alpha   \) mod 1 for some irrational number \(  \alpha  \). Let \(  O(x) := \{f^{n}(x): {n\in \mathbb Z} \}  \) be the full orbit of the point \(  x  \)
	and let  $\mathbb{P}:=\{P_x=O(x): x\in S^1\}$.
	We claim that there does not exist any disintegration of $m$ with respect to the partition \(\mathbb P\). Supposing the contrary, let $\big\{\mu_{P_x}: P_x\in \mathbb{P}\big\}$ be a disintegration of $m$ with respect to $\mathbb{P}$. We prove that the family of the pull-backs (defined as \(f_\star\mu(A)=\mu(f^{-1}(A))\), for all \(A \in X\))  of $\mu_{P_x}$ by $f$, $\big\{f_\star\mu_{P_x}: P_x\in \mathbb{P}\big\}$ is a also a disintegration of $m$ with respect to $\mathbb{P}$.
	\begin{enumerate}
		\item $\mu_{P_x}(P_x)=1$ for $\widehat{\mu}$--almost every $P_x\in \mathbb{P}$ implies $f_\star\mu_{P_x}(P_x)=\mu_{P_x}(f^{-1}(P_x))=\mu_{P_x}(P_x)=1$ for $\widehat{\mu}$--almost every $P_x\in \mathbb{P}$.
		\item For every measurable subset $E$ of $Y$, the map $P_x\to \mu_{P_x}(E)$ is measurable implies $P_x\to f_\star\mu_{P_x}(E)$ is also measurable.
		\item For every measurable subset $E$ of $Y$, by the invariance of $m$, we have
		\[m(E)=m(f^{-1}(E))={\displaystyle \int_{\mathbb{P}}} \mu_{P_x}(f^{-1}(E))\ d \widehat{\mu}(P)={\displaystyle \int_{\mathbb{P}}} f_\star\mu_{P_x}(E)\ d \widehat{\mu}(P).\]
	\end{enumerate}
	This proves that $\big\{f_\star\mu_{P_x}: P_x\in \mathbb{P}\big\}$ is a disintegration of $m$ with respect to $\mathbb{P}$. By Proposition \ref{thm:unique}, we have $f_\star\mu_{P_x}=\mu_{P_x}$  for $\widehat{\mu}$--almost every $P_x\in \mathbb{P}$. Thus $\mu_{P_x}$ is $f$-invariant for $\widehat{\mu}$--almost every $P_x\in \mathbb{P}$. Since
	Lebesgue measure $m$ is the only invariant measure (because it is an irrational rotation), so $\mu_{P_x}=m$  for $\widehat{\mu}$--almost every $P_x\in \mathbb{P}$. This is a contradiction because $m(P_x)=0$ and $\mu_{P_x}(P_x)=1$ for $\widehat{\mu}$--almost every $P_x\in \mathbb{P}$. Thus there does not exist any disintegration of $m$ with with respect to the partition into orbits, $\mathbb{P}$.
\end{example}

\begin{proof}[Proof of Proposition \ref{thm:unique}]  We know that $\widehat{\mathcal{A}}$ has a countable generator, say $U$. Let $A_U$ denotes the algebra generated by $U$. Let if possible \(\mu\) has two disintegrations with respect to \(\mathbb P\) namely, \(\{\mu_P: P \in \mathbb P\}\) and \(\{\mu'_P:P \in \mathbb P\}\).
	
	It is enough to prove that $\widehat{\mu}(A_{E})=\widehat{\mu}(B_{E})=0$ for every $E\in A_{U}$, where
	\[A_{E}=\Big\{P\in \mathbb{P}: \mu_{P}(E)< \mu'_{P}(E)\Big\} \text{ and } B_{E}=\Big\{P\in \mathbb{P}: \mu_{P}(E)>\mu'_{P}(E)\Big\}.\]
	Note that if $P\in A_{E}$ then $P\subseteq \widehat{\tau}^{-1}(A_{E})$ and therefore $\mu_{P}(E\cap \widehat{\tau}^{-1}(A_{E}))=\mu_{P}(E)$ for $\widehat{\mu}$-almost every $P\in \mathbb{P}$, otherwise $\mu_{P}(E\cap \widehat{\tau}^{-1}(A_{E}))=0$. The same is true for $\mu'_P$. Moreover,
	\[
	\mu(E\cap \widehat{\tau}^{-1}(A_{E}))=
	\begin{cases}
	{\displaystyle\int_{\mathbb{P}}}\mu_{P}(E\cap \widehat{\tau}^{-1}(A_{E}))d\widehat{\mu}(P)={\displaystyle\int_{A_E}}\mu_{P}(E)d\widehat{\mu}(P)&\\
	\vspace{.2cm}\\
	{\displaystyle\int_{\mathbb{P}}}\mu'_{P}(E\cap \widehat{\tau}^{-1}(A_{E}))d\widehat{\mu}(P)={\displaystyle\int_{A_E}}\mu'_{P}(E)d\widehat{\mu}(P)&.
	\end{cases}\]
	The implies that
	\[{\displaystyle\int_{A_E}}(\mu_{P}(E)-\mu'_{P}(E))d\widehat{\mu}(P)=0\]
	which leads to $\widehat{\mu}(A_E)=0$ because $\mu_{P}(E)-\mu'_{P}(E)>0$. Similarly, we can prove $\widehat{\mu}(B_E)=0$. Since $A_{U}$ is countable, so
	\[\widehat{\mu}(\cup_{E\in A_{U}}(A_E \cup B_E))=0.\]
	This proves that for almost every $P$, $\mu_{P}$ and $\mu'_{P}$ agree on the algebra $A_U$, hence they agree on the sigma algebra, $\mathcal{A}$, generated by $A_U$.
\end{proof}

\subsection{Existence of disintegrations}
To guarantee the existence of a disintegration we need some further assumptions both on the probability space and on the partition.
Before giving the next definition we recall   that  a partition \(\mathbb P_2\) of \(X\) is a refinement of a partition \(\mathbb P_1\) of \(X\), denoted as \(\mathbb P_1\preceq \mathbb P_2\), if for every \(A_{2} \in \mathbb P_2\), there exists \( A_{1}\in \mathbb P_1\) such that \(  A_{2}\subseteq A_{1}  \).

\begin{definition}[Measurable Partitions]\label{mp}
	A partition $\mathbb{P}$ into measurable subsets of \(  X  \) is a  \(  \mu  \)-\emph{measurable partition}  if there exists a subset $X_0\subset X$ of full measure and a sequence of countable partitions $\mathbb{P}_n$, each consisting of measurable sets, such that $\mathbb{P}_n\preceq \mathbb{P}_{n+1}$ for all $n\in \mathbb{N}$ and
	every point in \(  X_{0}  \) can be written as a countable intersection of partition elements \(  P_{n}\in \mathbb P_{n}  \).
	We   refer to \(  \mathbb P  \) as  a \emph{measurable partition} for  \(  \mu  \).
\end{definition}

\begin{example}[Countable partitions are measurable] \label{sens}
	Notice that every countable partition is measurable with respect to any probability measure, take \(\mathbb P_n=\mathbb P\) for all \(n \in \mathbb N\). On the other hand,  in general, the measurability of a partition  depends upon the measure under consideration.
	For example let $(X,\mathcal{A}, \mu)$ be a probability space and $f:X\to X$ be a measurable transformation such that $\mu$ is ergodic (see Definition \ref{def:ergodic} below). We claim that the partition $\mathbb{P}:=\{P_x=O(x): x\in S^1\}$ into orbits is measurable with respect to $\mu$ if and only if there exists an orbit of full measure, which implies that the measurability of the partition depends on the measure.  Assume that $\mathbb{P}$ is measurable, then by definition there exists a sequence $\mathbb{P}_n$ of countable partitions such that $\mathbb{P}_n\preceq \mathbb{P}_{n+1}$ for all $n\in \mathbb N$ and for every $P_x\in \mathbb{P}$ there exists a sequence $P_x^n\in \mathbb{P}_n$ such that $P_x=\cap_{n\in \mathbb N}P_x^n$. This means that, for every $n\in \mathbb N$, each $P_x^n \in \mathbb{P}_n$ is a union of orbits and hence invariant under $f$, that is, $f^{-1}(P_x^n)=P_x^n$. By ergodicity either $\mu(P_x^n)=1 \text{ or } 0$. So for each $n\in \mathbb N$, there exists $P_x^n \in \mathbb{P}_n$ such that $\mu(P_x^n)=1$. The orbit corresponding to $\cap_{n\in \mathbb N}P_x^n$ has full measure. The other way around is trivial.
\end{example}

\begin{example}[An uncountable measurable partition]\label{ex:measpart}
	Consider the torus $\mathbb T^2 = S^1\times S^1$, endowed with the Lebesgue measure $m$ and the partition $\mathbb{P}=\{x\times S^1: x\in S^1\}$.	 For each $n\in \mathbb N$ , define $\mathbb{P}_n$ by
	\[\mathbb{P}_n=\{J(i,n)\times S^1: i\in \{1,2,3,\dots, 2^n\}\}, \text{ where } J(i,n)=\left[\frac{i-1}{2^n},\frac{i}{2^n}\right).\]
	Clearly, each $\mathbb{P}_n$ is finite and $\mathbb{P}_n\preceq \mathbb{P}_{n+1}$ for all $n\in \mathbb N$. Since $J(i,n)$ is a partition of $[0,1)$, for every $x\in [0,1)$ and $n\in \mathbb N$ there exists $i\in \{1,2,3,\dots, 2^n\}$ such that $x\in J(i,n)$. Clearly, $\overline{J(i,n)}$ defines a sequence of closed intervals whose diameter goes to $0$ as $n\to \infty$. So by Cantor's Intersection Theorem, the intersection of all $\overline{J}(i,n)$  contains just $x$. Therefore,
	\[x\times S^1=\bigcap_{n\in \mathbb N} J(i,n)\times S^1
	\]
	and so   $\mathbb{P}$ is a measurable partition.
\end{example}	

\begin{example}[A Non-measurable Partition]
	Consider the two torus $\mathbb T^2 = S^1\times S^1$, endowed with the Lebesgue measure $m$.
	Define an Anosov diffeomorphsim $f: \mathbb T^2 \to \mathbb T^2$ by the integer matrix
	\[\left(
	\begin{array}{cc}
	2 & 1 \\
	1 & 1 \\
	\end{array}
	\right)\text{mod}\ 1.\]
	Notice that \((0,0)\) is a fixed point and
	the eigen values of this matrix are \(\lambda=\frac{(3+\sqrt{5})}{2} > 1\) and \(1/\lambda\). The eigen vector corresponding to the eigenvalue \(\lambda\) is \(((1+\sqrt5)/2, 1)
	\). The eigenspace is a line which is the unstable manifold which wraps around the torus.
	
	Let $\mathbb{P}=\{\mathcal{W}^u(x): x\in \mathbb T^2\}$ be the partition of $\mathbb T^2$ into the unstable manifolds.
	We prove that $\mathbb{P}$ is not measurable. Assume that $\mathbb{P}$ is measurable, then by definition there exists a sequence $\mathbb{P}_n$ of countable partitions such that $\mathbb{P}_n\preceq \mathbb{P}_{n+1}$ for all $n\in \mathbb N$ and for every $ \mathcal{W}^u(x) \in \mathbb{P}$ there exists a sequence $P_n\in \mathbb{P}_n$ such that $\mathcal{W}^u(x)=\cap_{n\in \mathbb N}P_n$. This means, for every $n\in \mathbb N$, each $P_n \in \mathbb{P}_n$ is a union of unstable manifolds and hence invariant under $f$, that is, $f^{-1}(P_n)=P_n$. Observe that assuming \(c_1=1, c_2=(1+\sqrt5)/2\), then \(c_2/c_1\) being irrational gives an irrational flow on \(\mathbb{T}^2\) of the form \(\phi^t(x_1,x_2)=(x_1+c_1t,x_2+c_2t)\) mod \(\mathbb Z^2\) and the partition $\mathbb{P}$
	corresponds to the partition into orbits by this irrational flow, so $m$ is ergodic. By ergodicity either $m(P_n)=1 \text{ or } 0$. So for each $n\in \mathbb N$, there exists $P_n \in \mathbb{P}_n$ such that $\mu(P_n)=1$. The $\mathcal{W}^u(x)=\cap_{n\in \mathbb N}P_n$ has full measure which is absurd.
\end{example}

The following classical result, which we will prove in Section \ref{sec:rokhlin},  gives conditions for the existence of a disintegration of a probability measure with respect to a partition.

\begin{theorem}[Rokhlin Disintegration, \cite{RO}]\label{Rokhlin}
	Let  \(  X  \) be a separable metric space, \(  \mu  \) be a Borel probability measure on \(  X  \), and \(\mathbb{P}\) a measurable partition. Then \(  \mu  \) admits a (unique) disintegration with respect to $\mathbb{P}$.
\end{theorem}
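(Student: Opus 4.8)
The uniqueness half is already furnished by Proposition \ref{thm:unique}, so the plan is to \emph{construct} a disintegration. The natural starting point is the defining data of a measurable partition (Definition \ref{mp}): a full-measure set $X_0$ together with an increasing sequence $\mathbb{P}_1 \preceq \mathbb{P}_2 \preceq \cdots$ of countable partitions whose nested intersections recover $\mathbb{P}$, in the sense that for $x\in X_0$ the element $P(x)\in\mathbb{P}$ equals $\bigcap_n P_n(x)$, where $P_n(x)\in\mathbb{P}_n$ is the element containing $x$. For each fixed $n$ the partition $\mathbb{P}_n$ is countable, so by the Remark following Definition \ref{dis} it admits an explicit disintegration: for every $Q\in\mathbb{P}_n$ with $\mu(Q)>0$ put $\mu_Q(\cdot)=\mu(\cdot\cap Q)/\mu(Q)$. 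I would then try to obtain the conditional measures $\mu_{P(x)}$ as a suitable limit of the elementary measures $\mu_{P_n(x)}$ as $n\to\infty$.

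The convergence is governed by a martingale argument. Let $\mathcal{A}_n$ be the sigma-algebra generated by $\mathbb{P}_n$ (the measurable sets that are unions of elements of $\mathbb{P}_n$); since $\mathbb{P}_n\preceq\mathbb{P}_{n+1}$, these form an increasing filtration. For a fixed Borel set $E\subseteq X$, the functions $f_n^E(x):=\mu_{P_n(x)}(E)$ are precisely the conditional expectations $\mathbb{E}[\mathbf 1_E\mid\mathcal{A}_n]$, so $(f_n^E)_n$ is a bounded martingale and converges $\mu$-almost everywhere and in $L^1(\mu)$. To pass from ``one $E$ at a time'' to a genuine family of measures I would invoke separability: fix a countable algebra $\mathcal{B}_0$ generating the Borel sigma-algebra of $X$, apply the preceding to each $E\in\mathcal{B}_0$, and intersect the countably many full-measure convergence sets with $X_0$ to obtain a single full-measure set $X_1$ on which $\lim_n f_n^E(x)$ exists simultaneously for every $E\in\mathcal{B}_0$.

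For $x\in X_1$ I would define a set function $\nu_x$ on $\mathcal{B}_0$ by $\nu_x(E)=\lim_n\mu_{P_n(x)}(E)$. Monotonicity, finite additivity, and the normalization $\nu_x(X)=1$ pass to the limit at once. The crux is to upgrade $\nu_x$ to a countably additive premeasure so that Carath\'eodory's extension yields a Borel probability measure $\mu_{P(x)}$; here I expect to use the regularity/tightness of Borel measures on $X$ (for instance via the embedding of the separable metric space $X$ into the Hilbert cube, where the elementary measures $\mu_{P_n(x)}$ live in the weak-$*$ compact space of probability measures and their limit is automatically a measure), which is where the topological hypotheses on $X$ enter. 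Once $\mu_{P(x)}$ is constructed, the two defining properties are checked directly: property (1), $\mu_{P(x)}(P(x))=1$, follows because each $\mu_{P_n(x)}$ is carried by $P_n(x)$ and $P(x)=\bigcap_n P_n(x)$, so no mass escapes the nested intersection; property (2), the measurability of $P\mapsto\mu_P(E)$ and the identity $\mu(E)=\int_{\mathbb{P}}\mu_P(E)\,d\widehat{\mu}(P)$, follows by letting $n\to\infty$ in the finite-level identity $\mu(E)=\int f_n^E\,d\mu$ using the $L^1$-convergence of the martingale, since $P\mapsto\mu_P(E)$ is the $\widehat{\mu}$-a.e. limit of the measurable maps $P\mapsto\mu_{P_n(x)}(E)$.

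The step I expect to be the main obstacle is exactly the passage from finite to countable additivity of the limiting set functions $\nu_x$ for almost every $x$: the pointwise limit of measures need not a priori be a measure, and this must be forced by a compactness or tightness property of $\mu$ rather than by purely measure-theoretic bookkeeping. A secondary point requiring care is coherence across null sets, namely that after discarding a single $\widehat{\mu}$-null set of bad partition elements the measure $\mu_{P(x)}$ depends only on $P(x)$ and not on the chosen representative $x$; this again relies on $\mathcal{B}_0$ being countable, so that only countably many null sets are ever removed.
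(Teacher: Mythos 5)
Your proposal is correct and follows essentially the same route as the paper: the paper's $\alpha_n(\phi,x)$ are exactly your conditional expectations $f_n^E$, its Lemma \ref{lem lim} is a hands-on (upcrossing-style) proof of the martingale convergence you invoke, and its Extension Theorem (Theorem \ref{ext}), proved by embedding $X$ into $\{0,1\}^{\mathbb{N}}$ via a countable generator and exploiting compactness of the cylinder algebra, is precisely the device you anticipate for upgrading the finitely additive limit set functions $\nu_x$ to Borel probability measures. The remaining differences (Hilbert cube versus $\{0,1\}^{\mathbb{N}}$, citing martingale convergence versus proving it) are presentational.
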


\begin{example}[Disintegration with respect to a measurable partition]\label{ex:disint}
	Recall the   partition $\mathbb{P}=\{x\times S^1: x\in S^1\}$   of $\mathbb T^2 = S^1\times S^1$  of Example \ref{ex:measpart} which we proved was a measurable partition.
	Let $m_x$ be the Lebesgue measure on the fiber $x\times S^1$ measuring arc length. By the Fubini's Theorem, 	for every measurable set $E$, we have
	\begin{equation*}
	m(E)=\int_{S^1\times S^1}\chi_E \ dm =\int_{S^1}\left(\int_{S^1}\chi_E \ dm_{x}\right) \ dm_{y} =\int_{S^1} m_x(E) \ dm_{y}.
	\end{equation*}
	This proves that $\{m_x: x\in S^1\}$ disintegrates $m$ with respect to $\mathbb{P}$.
\end{example}

\subsection{Ergodic disintegration}

Let \(  (X, \widehat A, \mu)  \) be a probability measure space and
\(
f: X \to X.
\)
a measurable transformation.  Then there may be some relationship between the measure \(  \mu  \) and the map \(  f  \) and the main focus of this note is to study how these relationships may or may not be inherited  by the conditional measures for certain partitions. Recall that for a probability measure \(  \mu  \) we  define \(  f_{*}\mu (A):= \mu(f^{-1}(A)  \).

\begin{definition} [Ergodic, invariant and non-singular measure]\label{def:ergodic}
	Let \(  X  \) be a measure space and \(  \mu  \) a probability measure on \(  X  \).
	\begin{enumerate}
		\item
		\(\mu\) is  ergodic if \(E\in \widehat{\mathcal A}\) and \(f^{-1}(E)=E\) implies  \(\mu(E)=0\) or \(\mu(E)=1\).
		\item
		\(\mu\) is non-singular  if   \(  f_{*}\mu \ll \mu  \);
		\item
		\(\mu\) is invariant if   \(  f_{*}\mu=\mu  \).
	\end{enumerate}
\end{definition}

If \(  \mu  \) is \emph{not} ergodic with respect to \(  f  \) then we can decompose \(  X  \) into the union of two completely invariant non-trivial sets \(  X=A\cup A^{c}  \) with \(  f^{-1}(A)=A\) and \( f^{-1}(A^{c})=A^{c}  \) which means that there are essentially two distinct dynamical systems given by \(  f  \). In principle there is no reason why the measure \(  \mu  \) restricted to either \(  A  \) or \(  A^{c}  \) should be ergodic, and if it is not then we can repeat the argument to further decompose the space \(  X  \) into non-trivial fully invariant sets. A natural question is whether we can  write \(  \mu  \) as a combination of ergodic measures, or, more precisely, using the language above,
if there exists a partition \(  \mathbb P  \) of \(  X  \) into fully invariant measurable sets such that the conditional measures \(  \mu_{P}  \) given by the Rokhlin disintegration are all ergodic.

Our main result is the existence of an essentially canonical partition for which the conditional measures of the Rokhlin disintegration are always ergodic which has been proved. We show moreover that for this partition the non-singularity or invariance of the original measure \(  \mu  \) is always preserved in the disintegration.

\begin{definition}[Dynamical Partition]\label{dp}
	Let \(  (X,\widehat{\mathcal A}, \mu)  \) be a Borel probability space, let $\mathcal{A}$ be the algebra generated by the countable generator of $\widehat{\mathcal A}$, and let  $f:X\rightarrow X$ be  a measurable transformation.
	For $A \in \mathcal{A}$ and  $x \in X $,  let
	\begin{center}
		$\tau(x, A)= \liminf\limits_{n\rightarrow \infty}\frac{1}{n} \sharp \{0\leq i \leq{n-1} : f^i(x)\in A \}$
	\end{center}
	be the (liminf of the) asymptotic frequency of visits of the orbit of \(  x  \) to the set $A$.
	We define  the \emph{dynamical partition}  $\mathbb{P}_f$  of $X$ with respect to \(f\) as the partition into equivalence classes defined by the equivalence relation $x\backsim y$ if and only if $\tau(x, A)= \tau(y, A)$ for every $A \in {\mathcal{A}}$.
\end{definition}

\begin{example}	\label{ex:dynpar}
	Recall the   partition $\mathbb{P}=\{x\times S^1: x\in S^1\}$   of $\mathbb T^2 = S^1\times S^1$  of Examples \ref{ex:measpart} and \ref{ex:disint} which we proved is a measurable partition and for which we constructed the disintegration into conditional measures.
	Now, fix some \(  \alpha \in \mathbb Z  \) and define $T: \mathbb{T}^2 \to \mathbb{T}^2$ by
	\[T(x,y)=(x,y+\alpha x).  \]
	We claim that \(\mathbb P=\mathbb P_{T}\) is the dynamical partition with respect to \(T\).
	Indeed, note that any two $(x_1,y_1), (x_2,y_2)$ with $x_1\neq x_2$ do not belong to the same element of $\mathbb{P}$, because for $A,A'\in \mathcal{A}$ given by
	\[A=(0,q)\times S^1, \ A'=(q,1) \times S^1,\]
	where $q\in (x_1,x_2)$ is a rational, we have
	\[\tau((x_1,y_1),A')=\lim_{n\to \infty}\frac{1}{n}\sharp\{0\leq i \leq n-1: f^{i}(x)\in A'\}=0,\]
	and
	\[\tau((x_2,y_2),A')=\lim_{n\to \infty}\frac{1}{n}\sharp\{0\leq i \leq n-1: f^{i}(x)\in A'\}=1.\]
\end{example}

Now, having defined and stated all what is required, we are presenting the main result of this document.
\begin{theorem}[Ergodic Decomposition Theorem]\label{ergodic}
	Let \(  (X,\widehat{\mathcal A}, \mu)  \) be a Borel probability space where \(X\) is a separable metric space, $f:X\rightarrow X$  a measurable transformation and  $\mathbb{P}$ be the dynamical partition of $X$ with respect to \(f\).  Then \(  \mathbb P  \) is a measurable partition and for the Rokhlin disintegration $\{\mu_P\}$  of \(  \mu  \) with respect to \(  \mathbb P  \), we have, for $\widehat{\mu}$-almost every \(P\),  $\mu_P$ is ergodic. Moreover,if \(  \mu  \) is non-singular then for $\widehat{\mu}$-almost every \(P\),  $\mu_P$ is non-singular, and  if \(  \mu  \) is invariant, then for $\widehat{\mu}$-almost every \(P\), $\mu_P$ is invariant.
\end{theorem}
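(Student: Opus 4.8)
The plan is to verify the four assertions in turn, using the uniqueness of disintegration (Proposition \ref{thm:unique}) and the Rokhlin theorem (Theorem \ref{Rokhlin}) as the main engines.

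\emph{Measurability and existence.} Enumerate the countable algebra as $\mathcal A = \{A_1, A_2, \dots\}$ and put $g_k(x) = \tau(x, A_k)$. Each $g_k$ is measurable, being a $\liminf$ of the measurable Birkhoff averages, with values in $[0,1]$. For each $n$ I would take $\mathbb P_n$ to be the finite partition whose atoms are the common dyadic level sets $\{x : g_k(x) \in [j_k 2^{-n}, (j_k+1)2^{-n}),\ k=1,\dots,n\}$ as $(j_1,\dots,j_n)$ ranges over $\{0,\dots,2^n-1\}^n$. Then $\mathbb P_n \preceq \mathbb P_{n+1}$, and since the dyadic intervals shrink to points, $\bigcap_n \mathbb P_n(x) = \{y : g_k(y)=g_k(x)\ \forall k\}$, which is exactly the class $P(x)$. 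Hence $\mathbb P$ is measurable (on all of $X$), and Theorem \ref{Rokhlin} produces the unique disintegration $\{\mu_P\}$.

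\emph{Structural observations.} Because shifting an orbit by one step alters a Birkhoff average only through two boundary terms, $g_k \circ f = g_k$; thus each atom $P$ is forward invariant, $f(P)\subseteq P$, and $\widehat\tau\circ f = \widehat\tau$. Consequently every saturated set obeys $f^{-1}(\widehat\tau^{-1}(B)) = \widehat\tau^{-1}(B)$, whence $\widehat{f_*\mu} = \widehat\mu$; and from $P\subseteq f^{-1}(P)$ we get $f_*\mu_P(P)=\mu_P(f^{-1}P)=1$, so each $f_*\mu_P$ is again carried by $P$. I will use these facts repeatedly.

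\emph{Invariance and non-singularity.} For invariance, exactly as in Example \ref{examplekhan}, the family $\{f_*\mu_P\}$ meets both conditions of Definition \ref{dis} with base $\widehat{f_*\mu}=\widehat\mu$, and $\int_{\mathbb P} f_*\mu_P(E)\, d\widehat\mu = \mu(f^{-1}E) = \mu(E)$; uniqueness forces $f_*\mu_P=\mu_P$ for a.e.\ $P$. For non-singularity, set $\phi = d(f_*\mu)/d\mu$. Localizing the identity $\int_B \bigl(\int_E \phi\, d\mu_P\bigr)\, d\widehat\mu = \int_{E\cap\widehat\tau^{-1}(B)} \phi\, d\mu = f_*\mu(E\cap\widehat\tau^{-1}(B)) = \int_B (f_*\mu)_P(E)\, d\widehat\mu$ over all $B$ (the device from the proof of Proposition \ref{thm:unique}) identifies the canonical disintegration of $f_*\mu$ as $(f_*\mu)_P = \phi\,\mu_P$ for a.e.\ $P$; taking $E=X$ shows $\int_X\phi\,d\mu_P=1$, so these are genuine probability measures. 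Since $\{f_*\mu_P\}$ is also a disintegration of $f_*\mu$ with base $\widehat\mu$, uniqueness gives $f_*\mu_P = \phi\,\mu_P \ll \mu_P$, i.e.\ $\mu_P$ is non-singular.

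\emph{Ergodicity.} Fix $A\in\mathcal A$ and let $\tau_P(A)$ be the constant value of $\tau(\cdot,A)$ on $P$. Since $\tau(\cdot,A)$ is fiber-constant, disintegration gives $\int_B \tau_P(A)\, d\widehat\mu = \int_{\widehat\tau^{-1}(B)} \tau(x,A)\, d\mu$ for every $B$; when $\mu$ is invariant, Birkhoff's theorem on the invariant set $\widehat\tau^{-1}(B)$ turns the right side into $\mu(A\cap\widehat\tau^{-1}(B)) = \int_B \mu_P(A)\, d\widehat\mu$, so $\mu_P(A)=\tau_P(A)$ for a.e.\ $P$, simultaneously for all $A$ in the countable algebra. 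Then on a.e.\ fiber the Birkhoff averages of every generator converge $\mu_P$-a.e.\ to the constant $\mu_P(A)$, which is the classical criterion forcing the invariant $\sigma$-algebra $\mathcal I$ to be $\mu_P$-trivial, i.e.\ $\mu_P$ ergodic. The main obstacle is exactly this last passage in full generality: the identity $\mu_P(A)=\tau_P(A)$ and the step from ``frequencies constant on fibers'' to ``$\mathcal I$ is $\mu_P$-trivial'' both lean on Birkhoff convergence, which needs invariance (or, via a ratio/Hurewicz argument, at least non-singularity). For a merely Borel $\mu$ the Cesàro averages may fail to converge, so the delicate point is to control $\limsup_n \frac1n\sum_{i<n}\chi_{A\triangle E}(f^i x)$ for an invariant set $E$ — the quantity that lets one approximate $\chi_E$ by the fiber-constant $g_k$ and thereby show every invariant set is a.e.\ a union of atoms.
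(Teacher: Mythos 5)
Your treatment of measurability (dyadic level sets of the frequency functions $g_k$) and of invariance (uniqueness of the disintegration applied to $\{f_*\mu_P\}$) matches the paper's corresponding lemmas essentially verbatim, and your Radon--Nikodym route to non-singularity is a clean alternative to the paper's argument by contradiction (arguably cleaner, since the paper's contradiction argument has to manage the family $\{M_P\}$ rather loosely). The problem is the ergodicity step, which is the heart of the theorem, and you have correctly diagnosed but not closed the gap: your chain $\tau_P(A)=\mu_P(A)$ $\Rightarrow$ Birkhoff averages converge to a constant on each fiber $\Rightarrow$ the invariant $\sigma$-algebra is $\mu_P$-trivial runs through Birkhoff's ergodic theorem, which is unavailable for a merely Borel $\mu$ (the $\liminf$ in the definition of $\tau$ need not even be a limit). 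Since the theorem is asserted for arbitrary Borel probability measures, this leaves the main claim unproven in exactly the generality that distinguishes this statement from the classical one in \cite{W}.

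The paper's escape is to never identify $\tau_P(A)$ with $\mu_P(A)$ and never invoke a convergence theorem. Observe that if $E$ is invariant, $f^{-1}(E)=E$, then $\chi_E(f^i x)=\chi_E(x)$ for all $i$, so $\tau(x,E)=\chi_E(x)$ exactly; hence if $\tau(\cdot,E)$ is constant on $\mu_P$-almost all of $P$, then $\chi_E$ is $\mu_P$-a.e.\ constant and $\mu_P(E)\in\{0,1\}$ --- ergodicity follows with no ergodic theorem at all. It therefore suffices to show that the class $\mathcal{F}$ of measurable sets $E$ for which $\tau(\cdot,E)$ is constant on each fiber contains all of $\widehat{\mathcal{A}}$: by the very definition of the dynamical partition, $\mathcal{F}$ contains the countable generating algebra, and the paper extends it to the full $\sigma$-algebra by a monotone class argument (constancy passes to proper differences and to countable disjoint unions of sets in $\mathcal{F}$). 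If you replace your Birkhoff paragraph with this observation about invariant sets plus the monotone class step, the rest of your write-up stands.
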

The existence of an ergodic disintegration for invariant measures can be found in \cite{W}.
\medskip

Before starting the proofs of our results we give a couple of examples of the construction of the dynamical partition and the conditional measures in some concrete cases.

\begin{example}
	
	Recall the   partition $\mathbb{P}=\{x\times S^1: x\in S^1\}$   of $\mathbb T^2 = S^1\times S^1$  of Examples \ref{ex:measpart} and \ref{ex:disint} which we proved was a measurable partition and is a dynamical partition with respect to \(T\) in Example \ref{ex:dynpar} and for which we constructed the disintegration into conditional measures which we now prove to be infact the ergodic decomposition of \(m\).
	
	\medskip 	
	
	Indeed,  for an irrational $x\in S^1$, the restriction of $T$ to the fiber $x\times S^1$ is an irrational rotation for which the Lebesgue measure  $m_x$ is well known to be ergodic. This proves that the family $\{m_x: x\in S^1\cap \mathbb {Q}'\}$ is an ergodic decomposition.

	\medskip
	Moreover, observe that the determinant of the Jacobian of $T$ is \(\alpha\) everywhere, that is, $|D(J(T))(x,y)|=\alpha$ for all $(x,y)\in \mathbb{T}^2$. Therefore for \(  \alpha\neq 1  \) Lebesgue measure is not invariant. It is however non-singular with respect to \(T\) and so are \(m_x\), for all \(x \in S^1\). In the case when \(\alpha =1\), \(m\) is invariant also and so are \(m_x\), for all \(x \in S^1\).

\end{example}
\begin{remark}
	Note that the disintegration with respect to the partition into horizontal fibers $\mathbb{P}=\{S^1\times y: y\in S^1\}$, is not an ergodic decomposition, note that $\mathbb{P}$ is not a dynamical partition.
\end{remark}
The examples below give the illustration of ergodic decomposition of an invariant measure into invariant ergodic measures.
\begin{example}
	Let $X=[0,1]$ and define $f:X\to X$ by $f(x)=x^2$. The partition $\mathbb{P}$ be defined as
	\[\mathbb{P}=\{P_1=\{0\}, P_2=\{1\}, P_3=(0,1)\}.\]
	Let $\mu$ be an invariant Borel probability measure. For $0<\epsilon <1$, by invariance, we have $\mu([0,\epsilon^n])=\mu([0,\epsilon])$ for all $n \in \mathbb N$. By the continuity of $\mu$, we have $\mu(\{0\})=\mu([0,\epsilon])$ which means $\mu((0,\epsilon])=0$. Take $\epsilon=1-1/2n$, we have $\mu((0,1-1/2n])=0$ for all $n\in \mathbb N$. By the continuity of $\mu$ we get $\mu((0,1))=0$, therefore $\mu(\{0\},\{1\})=1$. Note that $\widehat{\mu}(0,1)=0$, the family $\{\mu_{P_{1}}=\delta_0, \mu_{P_{2}}=\delta_{1}\}$ disintegrates every invariant Borel probability measure $\mu$, that is, $\mu=\mu\{0\}\delta_0+\mu\{1\}\delta_1$.
\end{example}
Now, having illustrated examples we move on to prove the results.

\section{The extension theorem} \label{prf ext & lem}
To prove Theorem \ref{Rokhlin}, we first need to prove the following very important theorem.
The following theorem ensures the extension of a finite additive function on the Borel algebra $\mathcal{A}$ to a countably additive function on the Borel sigma-algebra $\widehat{\mathcal{A}}$ where $X$ is a  completely separable topological metric space.

\begin{theorem}[Extension Theorem]\label{ext}
	Let \(  X  \) be a  separable metric space and let \(  \mathcal A\) and \(  \widehat{\mathcal A} \) be the Borel algebra and  the Borel sigma-algebra respectively of \(  X  \). Then every finite-additive function $\mu:\widehat{\mathcal{A}}\rightarrow [0,1]$ with $\mu(X)=1$ and $\mu({\emptyset})=0$, can be extended to a probability measure $\mu:\mathcal{A}\rightarrow [0,1].$
\end{theorem}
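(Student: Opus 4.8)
The plan is to reduce the statement to the classical Carath\'eodory--Hahn--Kolmogorov extension theorem. Our task is to extend the finitely additive $\mu$ given on the Borel algebra $\mathcal{A}$ to a countably additive probability measure on the Borel $\sigma$-algebra $\widehat{\mathcal{A}}=\sigma(\mathcal{A})$. The extension theorem guarantees that a finitely additive set function on an algebra extends to a measure on the generated $\sigma$-algebra, uniquely because the total mass is finite, \emph{provided} the set function is already countably additive on $\mathcal{A}$, i.e. a premeasure; and since $\mu(X)=1$ the resulting measure is automatically a probability measure. Thus the whole problem reduces to upgrading finite additivity to countable additivity on $\mathcal{A}$. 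Because $\mu$ is finite and finitely additive, this is equivalent to \emph{continuity at the empty set}: for every decreasing sequence $A_1\supseteq A_2\supseteq\cdots$ of elements of $\mathcal{A}$ with $\bigcap_n A_n=\emptyset$ one must show $\mu(A_n)\to 0$. Setting up this reformulation is the first step.

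The mechanism I would use to establish continuity at $\emptyset$ is a compactness argument via the finite intersection property. Suppose, for contradiction, that $\mu(A_n)\to\delta>0$ (the limit exists by monotonicity). The idea is to squeeze each $A_n$ from inside by a \emph{compact} set $F_n$, drawn close enough to $A_n$ from within the algebra that $\mu(A_n\setminus F_n)<\delta\,2^{-n-1}$. Since $\bigcap_n F_n\subseteq\bigcap_n A_n=\emptyset$ and each $F_n$ is compact, the finite intersection property yields an $N$ with $\bigcap_{n\le N}F_n=\emptyset$. Using $A_N\subseteq A_n$ for $n\le N$ together with finite subadditivity, one obtains
\[
\mu(A_N)=\mu\Big(A_N\setminus\bigcap_{n\le N}F_n\Big)\le\sum_{n=1}^{N}\mu(A_n\setminus F_n)<\frac{\delta}{2},
\]
which contradicts $\mu(A_N)\ge\delta$. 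Hence $\mu(A_n)\to 0$, so $\mu$ is a premeasure, and the extension follows.

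The heart of the matter --- and the step I expect to be the main obstacle --- is the inner approximation used above: for $A\in\mathcal{A}$ and $\epsilon>0$ I need a compact $F\subseteq A$, reachable from within $\mathcal{A}$ so that $\mu(A\setminus F)$ is defined and bounded by $\epsilon$. This is a \emph{regularity/tightness} statement, and it is precisely here that the hypotheses on $X$ must be used in full. I would proceed in two stages. First, establish regularity on the topological generators: starting from a countable base of open balls generating $\widehat{\mathcal{A}}$, approximate the generators from inside by closed sets and from outside by open sets, and propagate this through the Boolean operations by verifying that the family of regular sets is an algebra containing the generators, hence all of $\mathcal{A}$. Second, upgrade the closed inner approximants to compact ones: a closed, totally bounded subset of $X$ is compact because $X$ is complete, and separability is what lets me arrange total boundedness.

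The genuine difficulty I would treat with care is that finite additivity alone does not force this tightness --- in principle mass could concentrate ``at infinity'' --- so the completeness of $X$ is indispensable and the regularity argument must be organized so that every approximant remains inside the algebra on which $\mu$ is defined. Once the tightness/regularity lemma is secured, the finite intersection argument above and the invocation of the Carath\'eodory extension theorem are routine, and the extension is a probability measure because $\mu(X)=1$.
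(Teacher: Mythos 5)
Your overall architecture is the classical ``compact class'' criterion: reduce countable additivity on the algebra to continuity at $\emptyset$, and derive the latter from inner approximation by compact sets via the finite intersection property. That skeleton is standard and correctly assembled, and you have correctly located the load-bearing step. The problem is that this step --- for every $A\in\mathcal{A}$ and $\epsilon>0$ a compact $F\subseteq A$ with $\mu(A\setminus F)<\epsilon$ --- cannot be extracted from finite additivity, no matter how the regularity argument is organized, and completeness and separability of $X$ do not rescue it. Concretely, let $X=\mathbb{N}$ with the discrete metric (a complete separable metric space), so that every singleton is open and lies in the Borel algebra $\mathcal{A}$, and let $\mu$ be a finitely additive probability charge vanishing on all finite sets (obtained from a Banach limit or a free ultrafilter). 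Every compact subset of $X$ is finite, so $\mu(X\setminus F)=1$ for every compact $F$: tightness fails outright. Moreover no extension exists at all, since a countably additive extension to $\widehat{\mathcal{A}}$ would have to satisfy $1=\mu(\mathbb{N})=\sum_n\mu(\{n\})=0$. So the statement, in the generality in which it is posed, is not provable, and your argument stops exactly where you predicted it would: the inner-regularity scheme you sketch (closed approximation on generators, propagation through Boolean operations, upgrading closed to compact) already requires countable additivity in its first step, so it cannot be used to establish countable additivity.

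For comparison, the paper takes a genuinely different route: it codes $X$ into $\{0,1\}^{\mathbb{N}}$ by the indicator sequence of a countable base, pushes $\mu$ forward to the algebra of cylinders, and uses the fact that cylinders are compact (clopen subsets of a compact space) so that the pushforward charge is automatically a premeasure --- no tightness of $\mu$ on $X$ is needed there. The difficulty is thereby relocated to showing that the Borel image $\varphi(X)$ carries full mass for the extended measure, so that it can be transported back to $X$; that is where completeness enters (via the Cantor intersection theorem), and it is also precisely the step that fails on the counterexample above, so the theorem really needs either additional hypotheses on $\mu$ or the almost-everywhere formulation used in the Rokhlin disintegration, where the full-measure claim is recovered for $\widehat{\mu}$-almost every conditional charge $\mu_P$ by integrating against the countably additive ambient measure. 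In short: your approach is a legitimate alternative skeleton, but the decisive tightness lemma is false as you would need it, and no reorganization of the regularity argument can close that gap.
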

To prove this theorem, let us first define a few notations.
Let $U=\{S_k:k\in\mathbb{N}\}$ be the countable generator of the algebra $\mathcal{A}$ and the sigma-algebra $\widehat{\mathcal A}$ and let $\widehat{\mathcal{B}}$ and $\mathcal{B}$ be the Borel sigma-algebra and Borel algebra respectively generated by the cylindrical subsets of ${[0,1]}^{\mathbb{N}}$.
Consider the mapping $\varphi: X\to \{0,1\}^{\mathbb N}$ defined by
\[\varphi(x)=\{\mathcal{X}_{S_{k}}(x)\}_{k=1}^{\infty}, \ \text{for all} \ x \in X.\]
Notice that this map depends on $U$.  In the following text, for any set $A$, we give notation that $A^1$ and $ A^0$ means $A$ and $A^c$ respectively.
To prove Theorem \ref{ext} we first prove the following three lemmas.
\begin{lemma}\label{lemma1}
	The image $\varphi(X)$ is characterized by the following three properties, that is, $\{i_k\}_{k=1}^{\infty}\in \varphi(X)$ if and only if
	\begin{enumerate}
		\item For every $n \in \mathbb N$,
		\[\bigcap _{k=1}^{n}S_k^{i_k}\neq \emptyset .\]
		\item There exists $j\in \mathbb N$ such that $i_j=1$ and diameter$(S_j)\leq1$.
		\item For every $j\in \mathbb N$ such that $i_j=1$ there exists $l(j)\in \mathbb N$ with
		\[l(j)>j,\ \overline{S}_{l(j)}\subseteq S_j  \text{ and } \text{diameter}(S_{l(j)})\leq \text{diameter}(S_j)/2.\]
	\end{enumerate}
\end{lemma}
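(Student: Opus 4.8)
The plan is to prove the equivalence by establishing each implication separately, with the forward direction being essentially immediate and the reverse direction carrying the real content. Throughout I take the generator \(U=\{S_k\}\) to be a countable base of the separable metric space (which generates \(\widehat{\mathcal A}\)), arranged so that it contains sets of diameter at most \(1\) and, around every point, arbitrarily small open sets whose closures nest inside prescribed neighborhoods; properties (2) and (3) are statements about precisely this richness of \(U\).

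For the forward direction, suppose \(\{i_k\}_{k=1}^\infty=\varphi(x)\) for some \(x\in X\). Then by definition \(\mathcal X_{S_k}(x)=i_k\), that is \(x\in S_k^{i_k}\), for every \(k\). Condition (1) is immediate, since \(x\) itself lies in \(\bigcap_{k=1}^n S_k^{i_k}\), so this finite intersection is nonempty. For (2), some base element \(S_j\) of diameter at most \(1\) contains \(x\), whence \(i_j=1\). For (3), given any \(j\) with \(i_j=1\) (so \(x\in S_j\)), one chooses a base element \(S_{l(j)}\) with \(x\in S_{l(j)}\), \(\overline{S}_{l(j)}\subseteq S_j\) and \(\mathrm{diam}(S_{l(j)})\le\mathrm{diam}(S_j)/2\); since infinitely many base elements shrink to \(x\), one may insist \(l(j)>j\). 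Observe that in this direction each \(S_{l(j)}\) is chosen to contain \(x\), so \(i_{l(j)}=1\) automatically — this is exactly the feature the converse must recover.

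For the reverse direction, suppose \(\{i_k\}\) satisfies (1)--(3); I must produce \(x\) with \(\varphi(x)=\{i_k\}\). The strategy is a nested-set construction. Using (2), fix \(j_1\) with \(i_{j_1}=1\) and \(\mathrm{diam}(S_{j_1})\le 1\); then inductively use (3) to pass from \(S_{j_m}\) to a set \(S_{j_{m+1}}\) with \(j_{m+1}>j_m\), \(\overline{S}_{j_{m+1}}\subseteq S_{j_m}\) and \(\mathrm{diam}(S_{j_{m+1}})\le\mathrm{diam}(S_{j_m})/2\), so that \(\mathrm{diam}(S_{j_m})\to 0\). Since the closures \(\overline{S}_{j_m}\) form a decreasing sequence of nonempty closed sets with diameters tending to \(0\), completeness of \(X\) together with Cantor's Intersection Theorem yields a single point \(x\) with \(\bigcap_m \overline{S}_{j_m}=\{x\}\). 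To finish I would verify \(\varphi(x)=\{i_k\}\): using (1), for each \(n\) choose \(z_n\in\bigcap_{k=1}^n S_k^{i_k}\); since \(z_n\in S_{j_m}\) and \(x\in\overline{S}_{j_{m+1}}\subseteq S_{j_m}\) whenever \(n\ge j_m\), the bound \(d(z_n,x)\le\mathrm{diam}(S_{j_m})\to 0\) forces \(z_n\to x\), and then for each fixed \(k\) the membership \(z_n\in S_k^{i_k}\) (valid for \(n\ge k\)) passes to the limit to pin down \(\mathcal X_{S_k}(x)\).

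The main obstacle is this final verification together with the inductive step that keeps the chain inside coordinate-\(1\) sets. Two points demand care. First, in the inductive step one must guarantee \(i_{j_{m+1}}=1\): the set \(S_{l(j_m)}\) furnished by (3) must be the coordinate-one set, and reconciling this with the bare statement of (3) requires combining the strict containment \(\overline{S}_{l(j_m)}\subseteq S_{j_m}\) with the finite-intersection property (1) to exclude \(i_{l(j_m)}=0\). Second, passing \(z_n\in S_k^{i_k}\) to the limit is automatic when \(i_k=0\), since \(S_k^c\) is closed, but when \(i_k=1\) one only obtains \(x\in\overline{S}_k\) a priori, so one must exploit the strict nesting \(\overline{S}_{l(j)}\subseteq S_j\) from (3) to push \(x\) off the boundary \(\partial S_k\) into \(S_k\) itself. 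I expect organizing these two closure/boundary arguments, and the precise invocation of completeness, to be where the weight of the proof lies.
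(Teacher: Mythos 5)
Your forward direction is the paper's argument, and your reverse-direction strategy --- a chain of basis elements with nested closures and halving diameters, Cantor's Intersection Theorem in the complete space, then a coordinate-by-coordinate verification --- is also essentially the paper's. The only structural difference is that the paper folds the verification into the construction: it sets $A_k=S_k^c$ when $i_k=0$ and $A_k=\overline{S}_{l(k)}$ when $i_k=1$ and applies Cantor to $B_n=\bigcap_{k\le n}A_k$, so its limit point lies in every $S_k^{i_k}$ by construction, whereas your single-chain point needs the approximating sequence $z_n$ and then runs into the boundary issue for $i_k=1$. That second difficulty you flag is real but resolvable exactly as you suggest: for $i_k=1$ one shows $x\in\overline{S}_{l(k)}\subseteq S_k$ rather than merely $x\in\overline{S}_k$. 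However, this fix, your chain construction, and the paper's own argument (which ``applies (3) to $l(j)$'' and needs $B_n\neq\emptyset$) all require knowing that the witness $l(j)$ satisfies $i_{l(j)}=1$.

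That is where the genuine gap sits. You propose to deduce $i_{l(j)}=1$ by ``combining the strict containment with the finite-intersection property (1)''; this deduction is impossible, because conditions (1)--(3) as literally stated place no constraint on the coordinate $i_{l(j)}$, and a point of $\bigcap_{k\le n}S_k^{i_k}$ lying in $S_j\setminus S_{l(j)}$ produces no contradiction. Indeed the lemma is false as stated: on $X=[0,1]$ with the basis of rational subintervals, let $i_k=1$ exactly when $S_k$ contains some interval $(1/2,1/2+\epsilon)$. Every finite intersection $\bigcap_{k\le n}S_k^{i_k}$ then contains such an interval, so (1) holds; (2) holds; and (3) as stated holds by taking $S_{l(j)}=(c,d)$ with $1/2<c<d$ and $[c,d]\subseteq S_j$ (for which $i_{l(j)}=0$). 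Yet no $x$ realizes this sequence: $x\in(1/2,3/5)$ forces $x>1/2$, while $x\notin(a,3/5)$ for every rational $a\in(1/2,3/5)$ forces $x\le 1/2$. The correct formulation of condition (3) (as in Viana--Oliveira) includes $i_{l(j)}=1$ in its conclusion; the paper's forward direction even verifies this (``Clearly $i_{l(k)}=1$'') before the clause is dropped from the statement. So the resolution is not a cleverer deduction from (1) but restoring $i_{l(j)}=1$ to hypothesis (3); once that is done, both of your flagged steps close and your proof goes through.
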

\begin{proof}``$\Rightarrow$'': If  $\{i_k\}_{k=1}^{\infty}\in \varphi(X)$ then for some $x\in X$, we have $\varphi(x)=\{i_k\}_{k=1}^{\infty}$ which implies that $x\in S_k^{i_{k}}$ for all $k \in \mathbb N$ and hence $\cap _{k=1}^{n}S_k^{i_k}$ contains $x$ for every $n \in \mathbb N$. This proves property (1).
	\newline
	Since $U$ is a basis, there exists some $S_l \in U$ with $x\in S_l$. So for $l$, we have $i_l=1$. Let $B(x,r)$ be a ball of radius  $r\leq1$ centered at $x$ such that  $B(x,r)\subseteq S_l$. Since $U$ is a basis, one can choose an $S_k\in U$ such that $x\in S_k\subseteq B(x,r)\subseteq S_l$. Clearly, we have $i_k=1$ and diameter($S_k$)$\leq1$. This proves (2).
	\newline
	Let $I$ be the set of all indices $l\leq k$ such that $x\in S_l$ and $\overline{S}_l\subseteq S_k$, where $k$ is an index satisfying (2). Let $r>0$ be such that $B(x,r)\subset S_l$ for all $i\in I$, such $r>0$ exists because $I$ is finite. Since $U$ is a basis, we can choose an $S_{l(k)}$ in $U$, for some $l(k)\in \mathbb N$, such that $\overline{S}_{l(k)}\subseteq B(x,r/2)\subseteq S_k$. Clearly $i_{l(k)}=1$, $l(k)>k$, $\overline{S}_{l(k)}\subseteq S_k$ and diameter($S_{l(k)}$)$\leq$diameter($S_k$)/2. This proves (3).
	\newline
	Conversely, let $\{i_k\}_{k\in \mathbb N}\in \{0,1\}^\mathbb N$ satisfies the above three properties. We show that there exists some $x\in X$ such that $\varphi(x)=\{i_k\}_{k\in \mathbb N}$. Define a sequence $\{A_k\}_{k\in \mathbb N}$ by
	\[A_k=
	\begin{cases}
	S_k^{i_k}& \text{if $i_k=0$}\\
	\overline{S}_{l(k)} & \text{if $i_k=1$}.
	\end{cases}\]
	Using this sequence we define another sequence $\{B_n\}_{n\in \mathbb N}$ as follows:
	\[B_n=\bigcap_{k=1}^{n}A_k.\]
	It is a decreasing sequence of non-empty closed sets that shrinks to a point. For each $n$, $B_n$ is non-empty because it contains the set $\bigcap _{k=1}^{n}S_k^{i_k}$ which is non-empty by (1). Each $B_n$ being a finite intersection of closed sets, $A_k$'s, is closed. By (2) there exists $j\in \mathbb N$ such that $i_j=1$ and diameter$(S_j)\leq1$, and by (3) there exists $l(j)\in \mathbb N$ with
	\[l(j)>j,\ \overline{S}_{l(j)}\subseteq S_k  \text{ and } \text{diameter}(S_{l(j)})\leq \text{diameter}(S_j)/2.\]
	Applying (3) to $l(j)$, there exists $l(l(j))\in \mathbb N$ with
	\[l(l(j))>l(j),\ \overline{S}_{l(l(j))}\subseteq S_l(j)  \text{ and } \text{diameter}(S_{l(l(j))})\leq \text{diameter}(S_l(j))/2.\]
	Continuing this way, we can construct a sequence $\overline{S}_j\supseteq \overline{S}_{l(j)}\supseteq \overline{S}_{l(l(j))}\supseteq \overline{S}_{l(l(l(j)))},\dots$ whose diameter goes to zero. The subsequence $B_{j}\supseteq B_{l(j)}\supseteq B_{l(l(j))}\supseteq B_{l(l(l(j)))},\dots$ of $\{B_n\}_{n\in \mathbb N}$ is such that
	\[B_{j}\supseteq \overline{S}_l(j)\supseteq B_{l(j)}\supseteq \overline{S}_{l(l(j))}\supseteq B_{l(l(j))}\supseteq \overline{S}_{l(l(l(j)))} \supseteq B_{l(l(l(j)))},\dots.\]
	This proves that the diameter of $B_n$ goes to zero as $n$ goes to $\infty$. By Cantor Intersection Theorem, there exists literally one $x\in X$ such that $x\in \cap_{n\in \mathbb N} B_n$. By definition of $B_n$, $B_n\subseteq\cap_{k=1}^{n}U_{k}^{i_k}$ for all $n$, so
	\[x\in \bigcap_{k\in \mathbb N}U_{k}^{i_k},\]
	which means $\varphi(x)=\{i_k\}_{k\in \mathbb N}$. The proof completes.
\end{proof}
\begin{lemma}\label{lemma2}
	The image $\varphi(X)$ is a Borel subset of $\{0,1\}^{\mathbb N}$.
\end{lemma}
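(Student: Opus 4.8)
The plan is to feed the characterization of $\varphi(X)$ from Lemma \ref{lemma1} into the product structure of $\{0,1\}^{\mathbb N}$ and rewrite each of the three defining properties as an explicit countable Boolean combination of cylinder sets. Since the Borel sigma-algebra on $\{0,1\}^{\mathbb N}$ is by definition generated by the cylinders and is closed under countable unions and intersections, any such combination is automatically Borel. The crucial observation is that the geometric data occurring in the three conditions — whether a finite intersection $\bigcap_{k=1}^{n}S_k^{a_k}$ is empty, whether $\mathrm{diam}(S_j)\le 1$, and whether $\overline{S}_l\subseteq S_j$ with $\mathrm{diam}(S_l)\le \mathrm{diam}(S_j)/2$ — depends only on the fixed basis $U=\{S_k\}$ and not on the variable sequence $\{i_k\}$ being tested. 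Hence each such predicate merely singles out an admissible index set, and the quantifiers ``for every $n$'', ``there exists $j$'', ``for every $j$ there exists $l$'' over these index sets turn into countable intersections and unions of cylinders.

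Concretely, I would introduce for each $n$ the finite family of admissible prefixes $\mathcal{F}_n=\{(a_1,\dots,a_n)\in\{0,1\}^n:\bigcap_{k=1}^{n}S_k^{a_k}\neq\emptyset\}$ and write the set $E_1$ of sequences satisfying property (1) as
\[
E_1=\bigcap_{n=1}^{\infty}\ \bigcup_{(a_1,\dots,a_n)\in\mathcal{F}_n}\{\{i_k\}:i_1=a_1,\dots,i_n=a_n\}.
\]
Each inner set is a cylinder, the union over $\mathcal{F}_n$ is finite, and the intersection over $n$ is countable, so $E_1$ is Borel (in fact closed). Writing $J=\{j:\mathrm{diam}(S_j)\le 1\}$, the set $E_2$ of sequences satisfying (2) is
\[
E_2=\bigcup_{j\in J}\{\{i_k\}:i_j=1\},
\]
a countable union of cylinders, hence Borel. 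Finally, putting $L(j)=\{l:l>j,\ \overline{S}_l\subseteq S_j,\ \mathrm{diam}(S_l)\le \mathrm{diam}(S_j)/2\}$ for each $j$, the set $E_3$ of sequences satisfying (3) is
\[
E_3=\bigcap_{j=1}^{\infty}\Big(\{\{i_k\}:i_j=0\}\cup\bigcup_{l\in L(j)}\{\{i_k\}:i_l=1\}\Big),
\]
again a countable combination of cylinders, hence Borel. By Lemma \ref{lemma1} we then have $\varphi(X)=E_1\cap E_2\cap E_3$, a finite intersection of Borel sets, so $\varphi(X)$ is Borel.

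The argument is essentially bookkeeping, and the only point that requires care — and which I expect to be the main, if minor, obstacle — is verifying that every quantifier genuinely ranges over a countable (or finite) index set, so that no uncountable union can creep in. This is guaranteed because the outer quantifiers run over $\mathbb N$ (and, for the prefixes, over the finite sets $\{0,1\}^n$), while the ``emptiness'', ``diameter'', and ``closure-containment'' predicates are fixed constraints determined by $U$ rather than by the tested sequence. Once this is checked, closure of the Borel sigma-algebra under countable unions and intersections completes the proof.
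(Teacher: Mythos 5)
Your proposal is correct and follows essentially the same route as the paper: both express $\varphi(X)$ as the intersection of three sets, each realized as a countable Boolean combination of cylinders indexed by the admissible prefix sets ($\mathcal{F}_n$, the small-diameter indices, and $L(j)$), which are determined by the fixed basis $U$ alone. Your write-up of conditions (2) and (3) using single-coordinate cylinders is in fact a slightly cleaner rendering of the paper's (typo-laden) unions over full prefixes, but the argument is the same.
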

\begin{proof} We are going to prove that  $\varphi(X)$ can be written as countable unions and intersections of cylinders in $\{0,1\}^\mathbb N$.
	For a fixed $n\in \mathbb N$ define
	\[S(n)=\big\{(a_1,a_2,\dots,a_n): a_i\in \{0,1\}, \cap_{k=1}^{n}S_k^{a_k}\neq\emptyset\big\}.\]
	Clearly, all $\{i_k\}_{k\in \mathbb N}\in \{0,1\}^\mathbb N$ for which $\cap_{k=1}^{n}S_k^{i_k}\neq \emptyset$ are given by the set
	\[\bigcup_{(a_1,a_2,\dots,a_n)\in S(n)}[a_1,a_2,\dots,a_n].\]
	The set of all $\{i_k\}_{k\in \mathbb N}\in \{0,1\}^\mathbb N$ for which $\cap_{k=1}^{n}S_k^{i_k}\neq \emptyset$ for every $n\in \mathbb N$ is given by
	\begin{equation}\label{eq1}
	\bigcap_{n\in \mathbb N}\ \bigcup_{(a_1,a_2,\dots,a_n)\in S(n)}[a_1,a_2,\dots,a_n].
	\end{equation}
	Let $I$ be the set of all $k\in \mathbb N$ such that diameter($S_k$)$\leq1$. The set of all $\{i_k\}_{k\in \mathbb N}\in \{0,1\}^\mathbb N$ which satisfy property (2) is given by
	\begin{equation}\label{eq2}
	\bigcup_{k\in S}\ \bigcup_{(a_1,a_2,\dots,a_{k-1})\in \{0,1\}^{k-1}}[a_1,a_2,\dots,a_{k-1},1].
	\end{equation}
	Fix $n\in \mathbb N$ and define $M(n)$ by
	\[M(n)=\{i\in \mathbb N:i>n, \overline{S_i}\subseteq S_n,\ \text{diameter}(S_i)\leq \text{diameter}(S_n)/2\}. \]
	The set of all $\{i_k\}_{k\in \mathbb N}\in \{0,1\}^\mathbb N$ which satisfy property (3) for $n$ is the union of
	\[\bigcup_{(a_1,a_2,\dots,a_{n-1})}[a_1,a_2,\dots,a_{n-1},0]\]
	with
	\[
	\bigcup_{i\in M(n)}\ \bigcup_{a_{n+1},a_{n+2},\dots,a_{i-1}}[a_1,a_2,\dots,a_{k-1},1].\]
	The set of all $\{i_k\}_{k\in \mathbb N}\in \{0,1\}^\mathbb N$ which satisfy property (3) is then given by
	\begin{equation}\label{eq3}
	\bigcap_{n\in \mathbb N}\bigg(\bigcup_{(a_1,a_2,\dots,a_{n-1})}[a_1,a_2,\dots,a_{n-1},0]\bigcup \bigcup_{i\in M(n)}\ \bigcup_{a_{n+1},a_{n+2},\dots,a_{i-1}}[a_1,a_2,\dots,a_{k-1},1]\bigg).
	\end{equation}
	By Lemma \ref{lemma1}, $\varphi(X)$ is the intersection of \eqref{eq1}, \eqref{eq2}, and \eqref{eq3}, hence a Borel subset. The proof completes.
\end{proof}

\begin{lemma}
	The map $\varphi: X\to \varphi(X)$ is a measurable bijection with a measurable inverse.
\end{lemma}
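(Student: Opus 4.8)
The plan is to verify three things in turn: that $\varphi$ is injective (hence a bijection onto $\varphi(X)$), that $\varphi$ is measurable, and that the inverse $\varphi^{-1}:\varphi(X)\to X$ is measurable. For injectivity, suppose $x\neq y$. Since $X$ is a metric space it is Hausdorff, so the open ball $B(x,d(x,y)/2)$ excludes $y$, and because $U$ is a basis there is some $S_k\in U$ with $x\in S_k\subseteq B(x,d(x,y)/2)$; then $x\in S_k$ while $y\notin S_k$, so $\mathcal{X}_{S_k}(x)=1\neq 0=\mathcal{X}_{S_k}(y)$ and therefore $\varphi(x)\neq\varphi(y)$. This shows $\varphi$ is a bijection from $X$ onto $\varphi(X)$.

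For the measurability of $\varphi$, it suffices to check that preimages of the generators of $\widehat{\mathcal B}$ are measurable in $X$. The Borel sigma-algebra on $\{0,1\}^{\mathbb N}$ is generated by the cylinders, and for a cylinder $[a_1,\dots,a_n]$ one computes directly
\[
\varphi^{-1}([a_1,\dots,a_n])=\{x\in X:\mathcal{X}_{S_k}(x)=a_k,\ 1\le k\le n\}=\bigcap_{k=1}^{n}S_k^{a_k}.
\]
Each $S_k$ is open and hence Borel, so $S_k^{a_k}$ is Borel and this finite intersection lies in $\widehat{\mathcal A}$. Since these preimages are measurable, $\varphi$ is measurable.

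The key step is the measurability of the inverse $\psi:=\varphi^{-1}:\varphi(X)\to X$, and this is where Lemma \ref{lemma2} is essential. Because $U$ generates $\widehat{\mathcal A}$ and because preimages under $\psi$ commute with complements and countable unions, it is enough to show that $\psi^{-1}(S_k)=\varphi(S_k)$ is a Borel subset of $\varphi(X)$ for each $k$. Now $x\in S_k$ is equivalent to the $k$-th coordinate of $\varphi(x)$ being equal to $1$, so
\[
\varphi(S_k)=\varphi(X)\cap\{y\in\{0,1\}^{\mathbb N}:y_k=1\},
\]
which is the intersection of $\varphi(X)$ with a cylinder. Since $\varphi(X)$ is Borel by Lemma \ref{lemma2} and the cylinder is Borel, $\varphi(S_k)$ is Borel, and therefore $\psi$ is measurable. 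The main obstacle is precisely this point: without knowing that $\varphi(X)$ is itself a Borel set, the image $\varphi(S_k)$ would not obviously be measurable, so Lemma \ref{lemma2} carries the real weight of the argument, while the remaining two assertions are routine consequences of the basis property of $U$ and the definition of $\varphi$.
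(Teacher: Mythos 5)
Your proposal is correct and follows essentially the same route as the paper: injectivity from the basis/Hausdorff property, measurability of $\varphi$ via $\varphi^{-1}([a_1,\dots,a_n])=\bigcap_{k=1}^{n}S_k^{a_k}$, and measurability of the inverse by writing images of generators as cylinders intersected with $\varphi(X)$ and invoking Lemma \ref{lemma2}. The only cosmetic difference is that you test the inverse on single generators $S_k$ while the paper tests it on finite intersections $\bigcap_{i=1}^{n}U_i^{a_i}$; both reductions are valid since the sets with measurable preimage form a sigma-algebra.
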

\begin{proof} Since the space $X$ is Hausdorff, therefore, for any distinct $x,y\in X$ there exist disjoint $S_k, S_l \in U$ such that $x\in S_k$ and $y\in S_l$. This proves $\varphi$ is injective. Since $\varphi^{-1}$ is well-behaved to union and intersection, and the Borel sigma  algebra $\widehat{\mathcal B}$ is generated by the cylinders $[a_1,a_2,\dots, a_n]$, $n\geq 0$, $a_i \in \{0,1\}$, so to prove $\varphi$ is measurable it suffices to show that $\varphi^{-1}([a_1,a_2,\dots, a_n])$
	is measurable for every $[a_1,a_2,\dots, a_n]\in \widehat{\mathcal B}$. Clearly, for any $n\geq0$ and $a_1,a_2,\dots, a_n \in \{0,1\}$
	\[\varphi^{-1}([a_1,a_2,\dots, a_n])=\bigcap_{i=1}^{n}U_i^{a_i}.\]
	This proves $\varphi$ is measurable.
	
	Note that the elements of $\mathcal{A}$ are finite intersections and the complements of the elements of $U$. Now, $\varphi^{-1}$ being injective is well-behaved to intersection and unions, and since the Borel sigma  algebra $\widehat{\mathcal A}$ is generated by $\mathcal{A}$, so to prove $\varphi^{-1}$ is measurable it suffices to show that $\varphi(\cap_{i=1}^{n}U_i^{a_i})$ is measurable for every $n\geq0$ and $a_1,a_2,\dots, a_n \in \{0,1\}$. Clearly, for any $n\geq0$ and $a_1,a_2,\dots, a_n \in \{0,1\}$
	\[\varphi(\bigcap_{i=1}^{n}U_i^{a_i})=[a_1,a_2,\dots, a_n]\bigcap\varphi(X).\]
	This together with Lemma \ref{lemma2} proves $\varphi^{-1}$ is measurable.
\end{proof}

Now, with the above three lemmas, we can easily prove the Extension Theorem,  more precisely we can prove the extension of every finite additive function from \( {\mathcal A} \to [0,1]\) to a probability measure on the sigma algebra \(\widehat{\mathcal A}\).
\begin{proof}[Proof of Theorem \ref{ext}]
	Define $\psi:\mathcal{{B}}\to [0,1]$ by
	\begin{equation}\label{eq5}
	\psi(B)=\mu(\varphi^{-1}(B)), \ \ B\in \mathcal{{B}}.
	\end{equation}
	Clearly, $\psi$ is finite additive. Also, the algebra $\mathcal{{B}}$ is compact, $\psi$ is $\sigma$-additive. Let $\widehat{\psi}: \widehat{\mathcal B}\to [0,1]$ denotes the extension of $\psi$ to the probability measure. We prove that $\widehat{\mu}:\widehat{\mathcal A}\to [0,1]$ defined by
	\[\widehat{\mu}(A)=\widehat{\psi}(\varphi(A))\]
	extends $\mu$ to a probability measure. Clearly, $\widehat{\mu}$ is a probability measure. Let $\digamma$ be a cover of $\varphi (X)$ by cylinders, then
	\[\widehat{\psi}(\bigcup_{D\in \digamma} D)=\mu(\bigcup_{D\in \digamma}\varphi^{-1}(D))=\mu(X)=1.\]
	Since $\widehat{\psi}$ is regular, so
	\begin{equation}\label{eq4}
	\widehat{\psi}(\varphi(X))=\inf\Big\{\widehat{\psi}(\bigcup_{D\in \digamma} D): \digamma \text{ is a cover of X by cylinders}\Big\}=1.
	\end{equation}
	For every $n\in \mathbb N$ and $a_1,a_2,\dots, a_n \in \{0,1\}$, we have
	\begin{equation*}
	\begin{split}
	\widehat{\mu}(\bigcap_{i=1}^{n}U_i^{a_i})& = \widehat{\psi}(\varphi(\bigcap_{i=1}^{n}U_i^{a_i}))\\
	&=\widehat{\psi}([a_1,a_2,\dots, a_n]\bigcap\varphi(X))\\
	&=\psi([a_1,a_2,\dots, a_n]\bigcap\varphi(X))\\
	&=\mu(\varphi^{-1}([a_1,a_2,\dots, a_n])), \ \text{ (by \eqref{eq4} and \eqref{eq5})}\\
	&=\mu(\bigcap_{i=1}^{n}U_i^{a_i}).
	\end{split}
	\end{equation*}
	This proves $\widehat{\mu}$ to be an extension of $\mu$ to a probability measure. This completes the proof of the theorem.
\end{proof}

\section{Proof of Rokhlin disintegration theorem}\label{sec:rokhlin}
To prove Theorem \ref{Rokhlin}, we first need to do the following construction of conditional measures.
Recall that \((X, \widehat{\mathcal A}, \mu)\) is a probability space with \(\mathbb P\) as its measurable partition that is, there exists a sequence \(\mathbb P_n\) of countable partitions of \(X\) such that \(\mathbb P_{n}\preceq \mathbb P_n\) for all \(n\in \mathbb N\) and \(\mathbb P= \bigcap_{n\in \mathbb N}\mathbb P_n\).

Let $\phi:X\to \mathbb R$ be a bounded measurable function. Consider the sequence $\alpha_n(\phi,\cdot):X\to \mathbb R$ of functions defined by
\[
\alpha_n(\phi,x)=
\begin{cases}
\frac{1}{\mu(P_{n}(x))}{\displaystyle\int_{P_{n}(x)}}\phi d\mu & \text{if $\mu(P_{n}(x))>0$}\\
\vspace{.2cm}\\
0 & \text{if $\mu(P_{n}(x))=0$}.
\end{cases}\]

\begin{lemma}\label{lem lim}There exists a set \(  x\in X_{\phi} \subset X  \) with \(  \mu( X_{\phi})=1  \) such that for each $x\in X_{\phi}$, the limit $\alpha (\phi,x):=\lim \alpha_n(\phi,x)$ exists.
\end{lemma}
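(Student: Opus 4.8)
The plan is to recognize the sequence $\alpha_n(\phi,\cdot)$ as a sequence of conditional expectations and then invoke the martingale convergence theorem. First I would introduce, for each $n$, the sub-sigma-algebra $\mathcal F_n\subseteq\widehat{\mathcal A}$ generated by the countable partition $\mathbb P_n$; concretely, $\mathcal F_n$ consists of all countable unions of elements of $\mathbb P_n$. Since $\mathbb P_n\preceq\mathbb P_{n+1}$, every element of $\mathbb P_n$ is a union of elements of $\mathbb P_{n+1}$, so $\mathcal F_n\subseteq\mathcal F_{n+1}$ and $(\mathcal F_n)_{n\in\mathbb N}$ is a filtration. The key observation is that, away from a null set, $\alpha_n(\phi,\cdot)$ is exactly the conditional expectation $E[\phi\mid\mathcal F_n]$: on each $P\in\mathbb P_n$ with $\mu(P)>0$ it takes the constant value $\frac{1}{\mu(P)}\int_{P}\phi\,d\mu$, which is precisely the $\mathcal F_n$-measurable function whose integral over $P$ equals $\int_P\phi\,d\mu$. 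The only place where $\alpha_n$ departs from the conditional expectation is on partition elements of zero measure, where it is set to $0$; but for each fixed $n$ the union of the zero-measure elements of $\mathbb P_n$ is a countable union of null sets, hence null, and taking the union over all $n$ still yields a single null set $N$ off which $\mu(P_n(x))>0$ for every $n$.

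With this identification in place, the martingale property follows from the tower property of conditional expectation: $E[\alpha_{n+1}(\phi,\cdot)\mid\mathcal F_n]=E[E[\phi\mid\mathcal F_{n+1}]\mid\mathcal F_n]=E[\phi\mid\mathcal F_n]=\alpha_n(\phi,\cdot)$, so $(\alpha_n(\phi,\cdot),\mathcal F_n)$ is a martingale on $(X,\widehat{\mathcal A},\mu)$. Because $\phi$ is bounded, say $|\phi|\le M$, we immediately get $|\alpha_n(\phi,x)|\le M$ for all $n$ and all $x$, so the martingale is uniformly bounded and in particular $\sup_n\int_X|\alpha_n(\phi,\cdot)|\,d\mu\le M<\infty$.

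Finally I would apply the martingale convergence theorem, which guarantees that an $L^1$-bounded martingale converges $\mu$-almost everywhere; the limit $\alpha(\phi,x):=\lim_n\alpha_n(\phi,x)$ then exists for every $x$ in a set $X_\phi$ of full measure, which we may take disjoint from the null set $N$ above. The step I expect to be the genuine obstacle is precisely this last one: establishing almost-everywhere convergence of the \emph{entire} sequence. An $L^2$ argument (the $\alpha_n$ are the orthogonal projections of $\phi\in L^2(\mu)$ onto the increasing subspaces $L^2(\mathcal F_n)$, hence form a Cauchy, and so convergent, sequence in $L^2$) is short but only yields convergence along a subsequence; upgrading to pointwise convergence of the full sequence rests on Doob's maximal (or upcrossing) inequality, which is the substantive ingredient. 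If a fully self-contained account is desired, this inequality is where the real work lies; otherwise it suffices to cite the martingale convergence theorem and verify the hypotheses as above.
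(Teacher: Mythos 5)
Your proof is correct, but it takes a genuinely different route from the paper. You identify $\alpha_n(\phi,\cdot)$ with the conditional expectation $E[\phi\mid\mathcal F_n]$ for the filtration generated by the partitions $\mathbb P_n$, observe the martingale property via the tower law, note uniform boundedness from $|\phi|\le M$, and then cite the martingale convergence theorem; your handling of the null set where some $\mu(P_n(x))=0$ is also correct. The paper instead gives a self-contained elementary argument: it fixes rationals $\beta<\rho$, considers the set $X(\beta,\rho)$ of points where $\liminf\alpha_n<\beta<\rho<\limsup\alpha_n$, and shows $\mu(X(\beta,\rho))=0$ by choosing interlaced subsequences of indices along which $\alpha_n$ drops below $\beta$ or exceeds $\rho$, building nested unions $A_i\supseteq B_i$ of partition elements, and comparing $\int_{A_i}\phi\,d\mu\le\beta\mu(A_i)$ with $\int_{B_i}\phi\,d\mu\ge\rho\mu(B_i)$ to force a contradiction unless the measure vanishes. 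This is precisely the upcrossing-type argument you correctly flag as the substantive ingredient hidden inside Doob's theorem, carried out concretely for this particular filtration. Your approach buys brevity and places the lemma in a standard probabilistic framework, at the cost of importing a nontrivial external theorem; the paper's approach is longer but self-contained, which matches its stated aim of being a fully detailed reference. Both are valid; if you wanted your version to be self-contained you would indeed have to supply the maximal or upcrossing inequality, at which point the two proofs essentially coincide.
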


\begin{proof}
	We know that $\phi$ is integrable.
	So for each $n\in \mathbb{N}$, the function $\alpha(\phi,\cdot)$ is well defined. Moreover, each $\alpha_n(\phi,\cdot)$ assumes countable number of values because each $\mathbb{P}_n$ is countable and $\alpha_n(\phi,\cdot)$ is constant on each $P_n\in \mathbb{P}_n$, therefore  $\alpha_n(\phi,\cdot)$ also measurable. For all $n\in \mathbb N$ and $x\in X$
	\[|\alpha_n(\phi,x)|\leq \frac{1}{\mu(P_{n}(x))}\int_{P_{n}(x)}|\phi| d\mu\leq \sup |\phi|,\]
	that is, the sequence $\alpha_n(\phi,\cdot)$ is uniformly bounded. Therefore, for each $x\in X$ the $\liminf \alpha_n(\phi,x)$ and $\limsup \alpha_n(\phi,x)$ exist and are finite. For $\beta,\rho \in \mathbb{Q}$ define $X(\beta,\rho)$ by
	\[X(\beta,\rho)=\{x\in X: \liminf \alpha_n(\phi,x)<\beta<\rho <\limsup \alpha_n(\phi,x)\}.\]
	To prove $\mu(\cup_{\beta,\rho \in \mathbb{Q}}X(\beta,\rho))=0$ which is equivalent to proving $\mu(X(\beta,\rho))=0$ for every $\beta,\rho \in \mathbb{Q}$. Fix $\beta,\rho \in \mathbb{Q}$. For a given $x\in X(\beta,\rho)$, take any two sequences $\{a^{x}_i\}$ and $\{b^{x}_i\}$ such that $a^{x}_i<b^{x}_1<a^{x}_2<b^{x}_2<\dots$ with
	\[\alpha_{a_i^{x}}(\phi,x)<\beta \text{ and }  \alpha_{b_i^{x}}(\phi,x)>\rho \text{ for every } i\geq1.\]
	Let $A_i(x)=P_{a^{x}}(x)$ and $B_i(x)=P_{b^{x}}(x)$. Define $A_i$ and $B_i$ by
	\begin{equation}\label{Ai}
	A_i=\bigcup_{x\in X(\beta,\rho)}A_i(x), \text{ and } B_i=\bigcup_{x\in X(\beta,\rho)}B_i(x),
	\end{equation}
	respectively. Since $a^{x}_{i}<b^{x}_{i}$ for every $i\in \mathbb N$, the partition $\mathbb{P}_{b^{x}_{i}}$ is a refinement of $\mathbb{P}_{a^{x}_{i}}$.
	So for every $i\in \mathbb N$, $X(\beta,\rho)\subseteq A_{i+1}\subseteq B_{i}\subseteq A_{i}$ and therefore
	\[X(\beta,\rho) \subseteq \bigcap_{i\in \mathbb N}A_i=\bigcap_{i\in \mathbb N}B_i.\]
	Because the sequence $\mathbb{P}_{n}$ of partitions is increasing, we can assume the sets $A_{i}(x)$ that form $A_{i}$ in \eqref{Ai} to be disjoint. Also observe that
	\[\int\phi d\mu=\sum_{P\in \mathbb{P}_{n}}\int_{P}\phi d\mu=\sum_{P\in \mathbb{P}_{n}}\mu(P)E(\phi,P)=\int\alpha_{n}(\phi) d\mu\]
	Using these facts, we have
	\begin{equation*}
	\int_{A_i}\phi d\mu=\sum_{A_{i}(x)}\int_{A_{i}(x)}\phi d\mu
	=\sum_{A_{i}(x)}\int_{A_{i}(x)}\alpha_{a^{x}_{i}}(\phi) d\mu
	\leq\sum_{A_{i}(x)}\beta \mu(A_{i}(x))
	=\beta \mu(A_{i}).
	\end{equation*}
	Similarly,
	\begin{equation*}
	\int_{B_i}\phi d\mu=\sum_{B_{i}(x)}\int_{B_{i}(x)}\phi d\mu
	=\sum_{B_{i}(x)}\int_{B_{i}(x)}\alpha_{b^{x}_{i}}(\phi) d\mu
	\geq\sum_{B_{i}(x)}\rho \mu(B_{i}(x))
	=\rho \mu(B_{i}).
	\end{equation*}
	$\phi\geq0$ and $B_i\subseteq A_i$ imply
	\[\beta \mu(A_{i})\geq \int_{A_i}\phi d\mu\geq \int_{B_i}\phi d\mu\geq \rho \mu(B_{i}),\]
	for every $i\in \mathbb N$. Taking $i\to \infty $ we get
	\[(\beta-\rho) \mu(\cup_{x\in X(\beta,\rho)}A_i(x))\geq0,\]
	which is true if and only if $\mu(\cup_{x\in X(\beta,\rho)}A_i(x))=0$. Consequently, $\mu(X(\beta,\rho))=0$ for every pair $\beta,\rho \in\mathbb{Q}$. This completes the proof for $\phi\geq0$.
	The general conclusion follows from the fact that every measurable bounded function $\phi$ can be written as the difference of two non--negative bounded measurable functions $\phi^{\pm}=\max\{0,\pm\phi\}$.
\end{proof}

We will apply Lemma \ref{lem lim} in the particular case where \(  \varphi  \) is the characteristic function of a set \(  A  \), in which case the following limit exists:
\begin{equation}\label{eq7}
\alpha(x,\chi_ {A})=\lim_{n\to \infty}\frac{\mu(P_{n}(x)\cap A)}{\mu(P_{n}(x))}.
\end{equation}

\begin{lemma}\label{lem meas}
	The function $\alpha(x,\chi_A)$ is measurable and is constant on each $P\in \mathbb{P}$.
	Moreover
	\[\int\chi_A d \mu=\int\alpha(\chi_A)d\mu\]
\end{lemma}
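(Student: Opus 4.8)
The plan is to derive all three claims from Lemma \ref{lem lim} applied to the characteristic function $\phi=\chi_A$, for which $\alpha_n(\chi_A,x)=\mu(P_n(x)\cap A)/\mu(P_n(x))$ and the pointwise limit $\alpha(\chi_A,x)$ exists on a set of full measure by \eqref{eq7}. For the measurability claim I would first recall, as already observed in the proof of Lemma \ref{lem lim}, that each $\alpha_n(\chi_A,\cdot)$ is measurable, being constant on each element of the countable partition $\mathbb{P}_n$ and hence taking only countably many values. Since $\alpha(\chi_A,\cdot)$ is the $\mu$-almost everywhere pointwise limit of the measurable functions $\alpha_n(\chi_A,\cdot)$, it is itself measurable once we set it equal to $0$ on the null set where the limit fails to exist.

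For the constancy on partition elements I would invoke the defining property of a measurable partition: each $P\in\mathbb{P}$ is a countable intersection $P=\bigcap_n P_n(x)$ of the elements $P_n(x)\in\mathbb{P}_n$ containing an arbitrary point $x\in P$. If $x,y$ lie in the same $P\in\mathbb{P}$, then $y\in P\subseteq P_n(x)$, and since $\mathbb{P}_n$ is a partition the element containing $y$ is unique, forcing $P_n(x)=P_n(y)$ for every $n$. Consequently $\alpha_n(\chi_A,x)=\alpha_n(\chi_A,y)$ for all $n$, and passing to the limit gives $\alpha(\chi_A,x)=\alpha(\chi_A,y)$, so $\alpha(\chi_A,\cdot)$ is constant on $P$.

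For the integral identity I would use the elementary formula already recorded in the proof of Lemma \ref{lem lim}, namely $\int\phi\,d\mu=\int\alpha_n(\phi)\,d\mu$ for each $n$, obtained by summing $\int_P\phi\,d\mu=\mu(P)\,\alpha_n(\phi,\cdot)\!\mid_P$ over $P\in\mathbb{P}_n$. Taking $\phi=\chi_A$ yields $\int\chi_A\,d\mu=\int\alpha_n(\chi_A)\,d\mu$ for every $n$. The functions $\alpha_n(\chi_A,\cdot)$ are uniformly bounded by $\sup|\chi_A|=1$, which is integrable on the probability space $X$, and they converge pointwise almost everywhere to $\alpha(\chi_A,\cdot)$; hence the Dominated Convergence Theorem permits passing the limit inside, giving $\int\chi_A\,d\mu=\lim_n\int\alpha_n(\chi_A)\,d\mu=\int\alpha(\chi_A)\,d\mu$.

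The only genuinely delicate step is this last interchange of limit and integral, and I expect it to present no real obstacle: we are on a finite measure space with integrands dominated by the constant $1$, so dominated convergence applies cleanly. The main points requiring care are simply invoking the almost-everywhere existence of the limit from Lemma \ref{lem lim} and ensuring the constancy argument correctly exploits the nested structure $\mathbb{P}_n\preceq\mathbb{P}_{n+1}$ so that $P_n(x)=P_n(y)$ holds simultaneously for all $n$.
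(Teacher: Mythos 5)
Your proposal is correct and follows essentially the same route as the paper: measurability via the pointwise (a.e.) limit of the measurable step functions $\alpha_n(\chi_A,\cdot)$, constancy on each $P$ via the nesting $P=\bigcap_n P_n(x)$ which forces $P_n(x)=P_n(y)$ for all $n$, and the integral identity via $\int\chi_A\,d\mu=\int\alpha_n(\chi_A)\,d\mu$ combined with the Dominated Convergence Theorem. Your explicit handling of the null set where the limit fails is a small but welcome refinement over the paper's statement.
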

\begin{proof}
	
	The function $\alpha(\phi)$ is measurable because it is the pairwise limit of the sequence of measurable functions $\alpha_{n}(\phi)$. For a given $P\in \mathbb{P}$ there exists, by definition, a sequence $P_n$ such $P_n\in \mathbb{P}_n$ for all $n\in \mathbb N$ and $P=\cap_{n\in \mathbb N}P_n$. Since $\alpha{_n}(\phi)$ is constant on $P_k$ for all $k\geq n$, therefore, on $P=\cap_{n\in \mathbb N}P_n$. Hence $\alpha(\phi)$ is constant on $X_\phi\cap P$.
	By Dominated Convergence Theorem, we have
	\[\int \alpha(\phi) d\mu=\lim_{n\to \infty}\int \alpha_n(\phi)d\mu=\int \lim_{n\to \infty} \alpha_n(\phi) d\mu=\int \phi d\mu.\qedhere\]
\end{proof}
We can now construct a family of finite additive measures on \(\mathcal A\).
Let $\mathbb{P}_{A}$ be all partition elements $P$ that intersects $X_{\chi_A}$, then $\widehat{\mu}(\mathbb{P}_{A})=1$.
Define $E(A,.):\mathbb{P}_{A} \to \mathbb{R}$ by $E(A,P)=\alpha(x,\chi_ {A})$, where $x\in X_{\chi_A}\cap P$. Note that  $E(A,P)$ is a constant for each $P\in \mathbb{P}_{A}$.
Clearly as by Lemma \ref{lem meas}, $\alpha(x,\chi_A)$ is measurable, so the mapping $E(A,.)$ is measurable and also
\begin{equation}\label{eq6}
\int \phi d\mu=\int \alpha(\chi_ {A})d\mu=\int E(A) d\widehat{\mu}.
\end{equation}
Define $\mathbb{P}^\prime$ by
\[\mathbb{P}^\prime=\bigcap_{A\in \mathcal{\widehat{A}}}\mathbb{P}_{A}.\]
Since $\widehat{\mu}(\mathbb{P}_{A})=1$ for every $A\in \mathcal{\widehat{A}}$ and the intersection is countable, so $\widehat{\mu}(\mathbb{P}^\prime)=1$. For $P\in \mathbb{P}^\prime$ define $\mu_{P}:\mathcal{\widehat{A}}\to [0,1]$ by
\[\mu_{P}(A)=E(A,P).\]
Clearly, $\mu_{P}(X)=E(X,P)=1$ and $\mu_{P}(\emptyset)=E(\emptyset,P)=0$.
Claim: $\mu_P$ is a finitely additive function.
\\ If $A, B\in \mathcal{A}$ are two disjoint sets then
\[\alpha(x,\chi_ {A})=\lim_{n\to \infty}\frac{\mu(P_{n}(x)\cap A)}{\mu(P_{n}(x))},\ \ \alpha(x,\chi_ {B})=\lim_{n\to \infty}\frac{\mu(P_{n}(x)\cap B)}{\mu(P_{n}(x))}\]
exists by \eqref{eq7}, therefore
\begin{equation*}
\begin{split}
\mu_{P}(A\cup B)=E(A\cup B,P)& = \lim_{n\to \infty}\frac{\mu(P_{n}(x)\cap (A\cup B))}{\mu(P_{n}(x))}\\
&=\lim_{n\to \infty}\Bigg\{\frac{\mu(P_{n}(x)\cap A)}{\mu(P_{n}(x))}+\frac{\mu(P_{n}(x)\cap B)}{\mu(P_{n}(x))}\Bigg\}\\
&=\lim_{n\to \infty}\frac{\mu(P_{n}(x)\cap A)}{\mu(P_{n}(x))}+\lim_{n\to \infty}\frac{\mu(P_{n}(x)\cap B)}{\mu(P_{n}(x))}\\
&=E(A,P)+E(B,P)\\
&=\mu_{P}(A)+\mu_{P}(B).
\end{split}
\end{equation*}
By Theorem \ref{ext}, $\mu_{P}$ can be extended to a probability measure on $X$. We still denote this extension by $\mu_{P}$. Thus, we have constructed a family of measures, $\{\mu_{P}:P\in \mathbb{P}\}$.

\begin{lemma}
	The family of measures $\{\mu_{P}:P\in \mathbb{P}\}$   is a disintegration of $\mu$ with respect to $\mathbb{P}$.
\end{lemma}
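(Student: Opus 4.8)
The plan is to verify the two defining properties of a disintegration from Definition \ref{dis} separately: first the integral representation together with the required measurability (property (2)), which is the more routine part, and then the concentration property \(\mu_P(P)=1\) for \(\widehat{\mu}\)-almost every \(P\) (property (1)), which is where the real work lies.

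For property (2) I would argue by a monotone class argument. Consider the collection \(\mathcal D\) of all \(E\in\widehat{\mathcal A}\) for which \(P\mapsto\mu_P(E)\) is measurable and \(\mu(E)=\int_{\mathbb P}\mu_P(E)\,d\widehat{\mu}(P)\). By construction \(\mu_P(A)=E(A,P)\) for \(A\) in the generating algebra \(\mathcal A\); Lemma \ref{lem meas} gives the measurability of \(P\mapsto E(A,P)\), and \eqref{eq6} gives \(\int_{\mathbb P}\mu_P(A)\,d\widehat{\mu}=\mu(A)\), so \(\mathcal A\subseteq\mathcal D\). Since each \(\mu_P\) is, after the extension furnished by Theorem \ref{ext}, a genuine probability measure, \(\mathcal D\) is closed under increasing unions and decreasing intersections: for \(E_k\uparrow E\) one has \(\mu_P(E_k)\uparrow\mu_P(E)\), so \(P\mapsto\mu_P(E)\) is a pointwise limit of measurable functions and the Monotone Convergence Theorem upgrades the identity from the \(E_k\) to \(E\); the decreasing case is identical using finiteness, and closure under complements is immediate from \(\mu_P(E^c)=1-\mu_P(E)\). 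Hence \(\mathcal D\) is a monotone class containing \(\mathcal A\), so \(\mathcal D=\sigma(\mathcal A)=\widehat{\mathcal A}\), which is exactly property (2).

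For property (1) I would fix \(P\in\mathbb P'\) and a point \(x\in P\), write \(P=\bigcap_n P_n(x)\) with \(P_n(x)\in\mathbb P_n\) decreasing, and use continuity from above of the measure \(\mu_P\) to reduce the claim to \(\mu_P(P_n(x))=1\) for every \(n\), since then \(\mu_P(P)=\lim_n\mu_P(P_n(x))=1\). The heart of the matter is the exact evaluation coming from the nesting of the partitions: for \(m\ge n\) we have \(P_m(x)\subseteq P_n(x)\), so by \eqref{eq7}
\[
E(P_n(x),P)=\alpha(x,\chi_{P_n(x)})=\lim_{m\to\infty}\frac{\mu(P_m(x)\cap P_n(x))}{\mu(P_m(x))}=1.
\]
More generally, for \(m\) past the level of any fixed \(Q\in\mathbb P_n\) the ratio defining \(\alpha(x,\chi_Q)\) is exactly \(\chi_Q(x)\), so \(E(Q,P)=\mathbf 1[P\subseteq Q]\) with no measure-theoretic slack.

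The main obstacle is that this exact value \(E(P_n(x),P)=1\) concerns the finitely additive set function \(E(\cdot,P)\), whereas \(\mu_P\) is its countably additive extension, and the two need not agree off the generating algebra \(\mathcal A\); the partition elements \(P_n(x)\) are Borel but need not lie in \(\mathcal A\). To close this gap I would first note that \(\bigcup_n\mathbb P_n\) is countable, so after intersecting \(\mathbb P'\) with the countably many full-measure sets \(\mathbb P_Q\), \(Q\in\bigcup_n\mathbb P_n\), the quantities \(E(Q,P)\) are simultaneously defined for \(\widehat{\mu}\)-almost every \(P\). Since \(X\) is a separable metric space, \(\mu_P\) is outer regular, so \(\mu_P(P_n(x))=\inf\{\mu_P(G):G\supseteq P_n(x)\text{ open}\}\); writing each open \(G\) as an increasing union of finite unions of basis elements (which lie in \(\mathcal A\)) computes \(\mu_P(G)\) as a limit of values of \(E(\cdot,P)\), and combining this with \eqref{eq7} and \(P_m(x)\subseteq P_n(x)\subseteq G\) for \(m\ge n\) forces \(\mu_P(P_n(x))=1\). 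I expect this concentration step, the passage from property (2) to property (1), to be the delicate part, precisely because property (2) is symmetric and cannot by itself detect that \(\mu_P\) lives on \(P\); it is the martingale-type construction of \(E(\cdot,P)\) in \eqref{eq7} that breaks this symmetry.
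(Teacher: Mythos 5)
Your verification of property (2) of Definition~\ref{dis} is precisely the paper's proof: both arguments show that the class of measurable sets $E$ for which $P\mapsto\mu_P(E)$ is measurable and $\mu(E)=\int_{\mathbb P}\mu_P(E)\,d\widehat\mu(P)$ is a monotone class containing the generating algebra, using \eqref{eq6} on the algebra and monotone convergence (plus countable additivity of the extended measures $\mu_P$ from Theorem~\ref{ext}) to pass to increasing unions and decreasing intersections. The difference is that the paper stops there, whereas you also attack property (1), $\mu_P(P)=1$ for $\widehat\mu$-almost every $P$. You are right to insist on this: the constant family $\mu_P\equiv\mu$ satisfies property (2) for any partition, so property (1) cannot follow from the integral identity alone and must be extracted from the construction. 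Your key identity, that for $Q=P_n(x)\in\mathbb P_n$ containing $P$ one has $E(Q,P)=\lim_m\mu(P_m(x)\cap Q)/\mu(P_m(x))=1$ because $P_m(x)\subseteq Q$ for all $m\ge n$, is exactly the right input, and you correctly isolate the obstruction, namely that $Q$ need not belong to the algebra $\mathcal A$ on which $E(\cdot,P)$ and its extension $\mu_P$ are known to coincide.

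The final step of your argument for property (1), however, does not close as sketched. Writing an open set $G\supseteq P_n(x)$ as an increasing union of sets $U_k\in\mathcal A$ gives $\mu_P(G)=\lim_k E(U_k,P)=\lim_k\lim_m\mu(P_m(x)\cap U_k)/\mu(P_m(x))$, whereas the inclusion $P_m(x)\subseteq G$ only controls the iterated limit in the opposite order, $\lim_m\lim_k=1$; the two need not agree, since for fixed $k$ the mass of $P_m(x)$ may escape $U_k$ as $m\to\infty$ unless a single $U_k$ eventually contains the $P_m(x)$, which requires a compactness you do not have in a general separable metric space. (Outer regularity also requires the conclusion for \emph{every} open $G\supseteq P_n(x)$, not just a convenient exhaustion.) One standard remedy is to arrange, or first treat the case, that the approximating partitions $\mathbb P_n$ consist of elements of $\mathcal A$, in which case $\mu_P(P_n(x))=E(P_n(x),P)=1$ holds directly and continuity from above finishes the proof; otherwise the transfer from the finitely additive $E(\cdot,P)$ to the extension $\mu_P$ off the algebra needs a genuine additional argument. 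To be fair, the paper's own proof is entirely silent on property (1), so your attempt, even with this gap, addresses a real omission rather than deviating from a complete argument.
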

\begin{proof}  Let $\digamma$ be the class of all measurable subsets of $X$ that satisfies the property (2) of the Definition \ref{dis}. It is enough to prove that $\digamma$ is a monotonic class containing the algebra $\mathcal{\widehat{A}}$.
	
	By equation \eqref{eq6}, for each $A\in \mathcal{\widehat{A}}$ the map $P\to \mu_{P}(A)$ is measurable and
	\[
	\mu(A)=\int \chi_ {A} d\mu=\int \mu_{P}(A) d\widehat{\mu}.
	\]
	This proves that $\mathcal{\widehat{A}}$ is contained in $\digamma$. Let $\{A_k\}_{k\in N}$ be an increasing, ordered by inclusion, sequence in $\digamma$. Then for each $k$ the sequence of map $P\to \mu_{P}(A_k)$ is measurable and
	\[
	\mu(A_k)=\int \chi_ {A_k} d\mu=\int \mu_{P}(A_k) d\widehat{\mu}.
	\]
	Let $A=\cup_{k\in \mathbb N}A_k$. The map
	\[P\to \mu_{P}(A)=\sup_{k\in \mathbb N}\mu_{P}(A_k),\]
	being the point wise limit of measurable functions, is measurable. Also by Monotone Convergence Theorem, we have
	\begin{equation*}
	\begin{split}
	\mu(A_k)&=\lim_{k \to \infty}\mu(A_k)=\lim_{k \to \infty}\int \chi_ {A_k} d\mu=\lim_{k \to \infty} \int \mu_{P}(A_k) d\widehat{\mu}\\
	&=\int \lim_{k \to \infty}\mu_{P}(A_k) d\widehat{\mu}=\int \mu_{P}(A) d\widehat{\mu}.
	\end{split}
	\end{equation*}
	Hence $A\in \digamma$. Similarly, If $\{A_k\}_{k\in \mathbb N}$ is a decreasing, ordered by inclusion, sequence in $\digamma$. Then for each $k$ the sequence of map $P\to \mu_{P}(A_k)$ is measurable and
	\[
	\mu(A_k)=\int \chi_ {A_k} d\mu=\int \mu_{P}(A_k) d\widehat{\mu}.
	\]
	Let $B=\cap_{k\in \mathbb N}A_k$. Then
	\[P\to \mu_{P}(B)=\lim_{k \to \infty}\mu_{P}(A_k)=\inf_{k\in N}\mu_{P}(A_k)\]
	being the point wise limit of measurable function is measurable. Also by  Monotone Convergence Theorem, we have
	\[
	\mu(B)=\int \mu_{P}(B) d\widehat{\mu}.
	\]
	Thus, $B\in \digamma$. This proves that $\digamma$ is a monotonic class containing $\mathcal{\widehat{A}}$. Hence, $\{\mu_{P}:P\in \mathbb{P}\}$ is a disintegration of $\mu$ with respect to $\mathbb{P}$.
\end{proof}

\section{Proof of ergodic decomposition theorem}

Recall that \((X, \widehat{\mathcal A},\mu)\) is a Borel probability space, \(X\) is a completely separable metric space, \(f:X\to X\) is a measurable transformation and \(\mathbb P_f\) is the dynamical partition of \(X\) with respect to \(f\).
Now, having all the tools we require to prove the main result, that is, Theorem \ref{ergodic}, we are proving it within the following lemmas.
\begin{lemma}\label{partn}
	The dynamical partition $ \mathbb{P}$ is a measurable partition.
\end{lemma}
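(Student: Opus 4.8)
The plan is to realize $\mathbb{P}$ as the partition of $X$ into the fibres (level sets) of a single measurable map assembled from the frequency functions $\tau(\cdot, A)$, and then to manufacture the increasing sequence of countable partitions required by Definition \ref{mp} by pulling back dyadic grids. The key structural fact is that the algebra $\mathcal{A}$, being generated by the \emph{countable} generator $U$ of $\widehat{\mathcal{A}}$, is itself countable (its elements are the finitely many Boolean combinations of countably many sets); so I enumerate it as $\mathcal{A} = \{A_j : j \in \mathbb{N}\}$. Then $x \sim y$ holds precisely when $\tau(x, A_j) = \tau(y, A_j)$ for every $j$, that is, $\mathbb{P}$ is exactly the partition of $X$ into the fibres of the map $x \mapsto (\tau(x, A_j))_{j \in \mathbb{N}} \in [0,1]^{\mathbb{N}}$.

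First I would check measurability. For fixed $A \in \mathcal{A}$ one has $\frac{1}{n}\sharp\{0 \le i \le n-1 : f^i(x) \in A\} = \frac{1}{n}\sum_{i=0}^{n-1}\chi_A(f^i(x))$; since $f$ and $A$ are measurable, each $\chi_A \circ f^i$ is measurable, so the partial averages are measurable and hence so is their $\liminf$, namely $\tau(\cdot, A)$, which takes values in $[0,1]$. Note the $\liminf$ always exists, so no exceptional null set is needed and I may take $X_0 = X$.

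Next I would construct the partitions. For each $j,m \in \mathbb{N}$ partition $[0,1]$ into the dyadic intervals $I(i,m) = [\,(i-1)/2^m,\ i/2^m\,)$ for $1 \le i \le 2^m$ (adjoining the endpoint $1$ to the last interval), and let $\mathbb{Q}_{j,m}$ be the finite measurable partition of $X$ with cells $\tau(\cdot, A_j)^{-1}(I(i,m))$, which are measurable because $\tau(\cdot, A_j)$ is. Define $\mathbb{P}_n := \bigvee_{j,m \le n}\mathbb{Q}_{j,m}$, the common refinement of finitely many finite partitions. Each $\mathbb{P}_n$ is then a finite (hence countable) partition into measurable sets, and enlarging the index range only refines, so $\mathbb{P}_n \preceq \mathbb{P}_{n+1}$ for all $n$, as Definition \ref{mp} demands.

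Finally I would verify the intersection identity $\bigcap_{n} P_n(x) = P(x)$ (the $\sim$-class of $x$) for every $x$, where $P_n(x) \in \mathbb{P}_n$ is the cell containing $x$. If $y \in \bigcap_n P_n(x)$, then for each $j$ and each $m$ the values $\tau(x, A_j)$ and $\tau(y, A_j)$ lie in a common dyadic interval of length $2^{-m}$; letting $m \to \infty$ forces $\tau(x, A_j) = \tau(y, A_j)$ for all $j$, so $y \sim x$. The converse is immediate, since $y \sim x$ places $y$ and $x$ in the same cell of every $\mathbb{Q}_{j,m}$, hence of every $\mathbb{P}_n$. Thus each partition element is the countable intersection of the $\mathbb{P}_n$-elements containing it, and $\mathbb{P}$ is a measurable partition. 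The main point to get right is precisely this last identity: the content is that equivalence with respect to the \emph{entire} family of frequency functions is captured by a \emph{single} diagonal sequence of countable partitions, which succeeds exactly because $\mathcal{A}$ is countable and the dyadic grids separate the values of each $\tau(\cdot, A_j)$.
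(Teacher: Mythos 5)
Your proposal is correct and follows essentially the same route as the paper: enumerate the countable algebra $\mathcal{A}$, pull back an increasingly fine grid on $[0,1]$ (you use dyadic intervals, the paper uses the first $n$ rationals) through the frequency functions $\tau(\cdot,A_k)$ for $k\le n$, take the common refinement to get finite partitions $\mathbb{P}_n\preceq\mathbb{P}_{n+1}$, and observe that the grids separate values in the limit so that $\bigcap_n P_n(x)$ is exactly the $\sim$-class of $x$. Your explicit check that each $\tau(\cdot,A)$ is measurable (as a $\liminf$ of measurable averages, so that $X_0=X$) is a detail the paper leaves implicit, but the argument is the same.
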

\begin{proof}
	Let $\{q_j: j \in \mathbb{N}\}$ be the set of rationals in [0, 1] and ${\mathcal{A}}=\{A_k; k \in \mathbb{N}\}$. For given $n \in \mathbb{N}$ consider the partition of the interval [0, 1] into intervals determined by the points $q_1, q_2,\ldots,q_n$.Define a relation $\backsim_n$ on $X_0$ by $x\backsim_n y$ if and only if for every $k= 1,2,3,\ldots,n,$ both $\tau(x, A_k)$ and \(\tau(y, A_k)\) belong to the same interval of the partition. Clearly, it is an equivalence relation on $X_0$. Let $\mathbb{P}_n$ be the partition determined by this relation. For each $n \in \mathbb{N}$, the partition $\mathbb{P}_n$ is finite because there are only finite number of ways, in fact $2^n$ ways, by which two elements $x, y \in X_0 $ can be related by $\backsim_n$. Also $\mathbb{P}_n \preceq \mathbb{P}_{n+1}$ for all $n \in \mathbb{N}$ is true by the definition of the relations determining $\mathbb{P}_n$ and $\mathbb{P}_{n+1}$. If for some $j, k \in \mathbb{N}$, $\tau(x, A_k)> q_j$ and $\tau(y, A_k)\leq q_j$, then $x, y$ are not in the same element $P\in \mathbb{P}_j$ and hence of $\bigwedge_{n \in \mathbb{N}} \mathbb{P}_n$. So if $x,y$ are in the same element of $\bigwedge_{n \in \mathbb{N}} \mathbb{P}_n$, then $\tau(x,A_k)= \tau(y,A_k)$ for every $k \in \mathbb{N}$ and also conversely. This proves that
	\begin{equation*}
	\mathbb{P}= \bigwedge_{n \in \mathbb{N}} \mathbb{P}_n. \qedhere
	\end{equation*}
\end{proof}
Now,  for Borel probability space $(X,\widehat{\mathcal{A}},\mu)$ with the dynamical partition $\mathbb{P}$, the Theorem \ref{Rokhlin} gives us a unique disintegration of $\mu$, say, $\{\mu_P:P\in \mathbb{P}\}$. Evidently the elements of partition $\mathbb{P}$ of $X$ are the union of orbits of elements of X, which induces several dynamical properties to $\mu_P$ for $\widehat{\mu}$-almost every $P$.
\begin{lemma}
	The conditional measure $\mu_P$ is ergodic for $\widehat{\mu}$-almost every $P$.
\end{lemma}
\begin{proof}
	Note that to prove that $\widehat{\mu}$- almost every $\mu_P$ is ergodic is equivalent to proving that for $\mu_P$-almost every $x\in X_0\cap P$, the function $\tau(x, E)$ is constant for every measurable set $E$ and every $P\in \mathbb{P}$. Fix $P \in \mathbb{P}$ and let $\mathcal{F}$ be the class of measurable sets for which the above property holds. Clearly $\mathcal{F}$ contains $A_U$ by the definition of $\mathbb{P}$. We prove that $\mathcal{F}$ is a monotonic class. Note that for any $E_1, E_2 \in \mathcal {F}$ such that $E_1 \subset E_2$, we have
	\begin{equation}
	\tau(x, E_2\backslash E_1) = \tau(x, E_2)- \tau(x, E_!),
	\end{equation}
	which shows that $\tau(x, E_2\backslash E_1)$ is well defined and constant on $X_0\cap P$, so $E_2\backslash E_1 \in \mathcal{F}$. In particular $E^C \in \mathcal{F}$ for every $E \in \mathcal{F}$. Also if $E_k$ is a sequence of pairwise disjoint sets in $\mathcal{F}$ then
	\begin{center}
		$\tau(x,\bigcup\limits_{k\in \mathbb{N}}E_k) = \sum\limits_{k \in \mathbb{N}}\tau(x, E_k)$
	\end{center}
	is constant on $X_0 \cap P$. This proves that $\bigcup\limits_{k \in \mathbb{N}}E_k\in \mathcal{F}$. From these two observations it follows that $\mathcal{F}$ is a monotonic class as follows: Let $A_i$ and $B_i$ be two sequences in $\mathcal{F}$ such that $B_i\subset B_{i+1}$ and $A_i\supset A_{i+1}$ for all $i\in \mathbb{N}$. Then using (1), we have
	\begin{center}
		$\bigcup\limits_{i \in \mathbb{N}}B_i= B_1 \bigcup\limits_{i \in \mathbb{N}}(B_{i+1}\backslash B_i)\in \mathcal{F}$ and $\bigcap\limits_{i \in \mathbb{N}}A_i=(\bigcup\limits_{i \in \mathbb{N}}A_i)^C\in \mathcal{F}$.
	\end{center}
	This proves that $\mathcal{F}$ is a monotoic class. Now, by the Monotonic Class Theorem, we get that $\mathcal{F}=\widehat{\mathcal{A}}$ the result follows.
\end{proof}

\begin{lemma}
	If $\mu$ is non-singular with respect to f then so is $\mu_P$ for $\widehat{\mu}$-almost every \(P\).
\end{lemma}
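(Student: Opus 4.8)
The plan is to show that non-singularity transfers from $\mu$ to the conditional measures $\mu_P$ by exploiting the uniqueness of the disintegration (Proposition \ref{thm:unique}), exactly as in Example \ref{examplekhan}. The key observation is that the family of push-forwards $\{f_\star \mu_P : P \in \mathbb{P}\}$ is \emph{almost} a disintegration of $f_\star \mu$, and since the partition $\mathbb{P}=\mathbb{P}_f$ is built from $f$-invariant asymptotic frequencies, the push-forward $f_\star \mu_P$ is again supported on (essentially) the same partition element $P$. I would first verify that each element $P$ of the dynamical partition is forward-invariant in the relevant sense: if $x \sim y$ then $f(x)$ and any point of $P$ share the same frequencies $\tau(\cdot, A)$, because $\tau(f(x),A)=\tau(x,A)$ for all $A \in \mathcal{A}$ (shifting the orbit by one step does not change a Cesàro liminf). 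Hence $f^{-1}(P)$ is a union of partition elements, and $f$ maps $P$ into $P$ up to the $\widehat{\mu}$-null set where the construction fails.

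\textbf{The main steps} are as follows. First I would establish that $\{f_\star \mu_P\}$ disintegrates $f_\star \mu$ with respect to $\mathbb{P}$: the concentration property $f_\star\mu_P(P) = \mu_P(f^{-1}(P)) = \mu_P(P) = 1$ holds for $\widehat{\mu}$-a.e.\ $P$ by the invariance of partition elements just noted; measurability of $P \mapsto f_\star\mu_P(E) = \mu_P(f^{-1}(E))$ follows from the measurability of $P \mapsto \mu_P(f^{-1}(E))$ already in the disintegration; and the integral identity reads
\[
f_\star\mu(E) = \mu(f^{-1}(E)) = \int_{\mathbb{P}} \mu_P(f^{-1}(E))\, d\widehat{\mu}(P) = \int_{\mathbb{P}} f_\star\mu_P(E)\, d\widehat{\mu}(P).
\]
Second, since $\mu$ is non-singular, $f_\star \mu \ll \mu$, so I can take a Radon--Nikodym density $h = \tfrac{d(f_\star\mu)}{d\mu}$ and write $f_\star\mu(E)=\int_E h\, d\mu = \int_{\mathbb{P}}\big(\int_E h\, d\mu_P\big)\, d\widehat{\mu}(P)$, exhibiting the family $\{(h\,d\mu_P)\}$ (suitably interpreted) as a second disintegration of $f_\star\mu$. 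Third, I would invoke the uniqueness Proposition \ref{thm:unique} to conclude that for $\widehat{\mu}$-a.e.\ $P$ the two disintegrations of $f_\star\mu$ agree, i.e.\ $f_\star \mu_P \ll \mu_P$, which is precisely non-singularity of $\mu_P$.

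\textbf{The main obstacle} I anticipate is the bookkeeping around the Radon--Nikodym density: to match $\{f_\star\mu_P\}$ against a family absolutely continuous with respect to $\{\mu_P\}$, I must confirm that the \emph{same} density $h$ (or its conditioned version) serves the conditional measures, and that the conditioned family genuinely satisfies both conditions of Definition \ref{dis} so that uniqueness applies. Concretely, the candidate second disintegration has elements $\nu_P$ defined by $\nu_P(E) = \int_E h\, d\mu_P \big/ \int_X h\, d\mu_P$ (a normalization is needed since $f_\star\mu_P$ is a probability measure), and I must check the normalizing factor $\int_X h\, d\mu_P$ is positive and finite for $\widehat{\mu}$-a.e.\ $P$ and that $\nu_P$ is concentrated on $P$; the invariance of the partition elements under $f$ secures the concentration. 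Once both families are bona fide disintegrations of the single measure $f_\star\mu$, uniqueness forces $f_\star\mu_P = \nu_P \ll \mu_P$ almost everywhere, completing the argument. The invariant case will follow by the identical scheme with $h \equiv 1$.
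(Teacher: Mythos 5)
Your proposal is correct in substance but takes a genuinely different route from the paper. The paper argues directly by contradiction: assuming a set $A$ of fibres with $\widehat\mu(A)>0$ on which each $\mu_P$ has a witnessing set $M_P$ with $\mu_P(M_P)=0$ but $\mu_P(f^{-1}(M_P))>0$, it forms $E=\bigcup_{P\in A}M_P$ and derives $\mu(E)=0$ while $\mu(f^{-1}(E))>0$. You instead push the whole disintegration forward: you verify that $\{f_\star\mu_P\}$ disintegrates $f_\star\mu$, exhibit a second disintegration $\nu_P(E)=\int_E h\,d\mu_P$ built from the Radon--Nikodym density $h=d(f_\star\mu)/d\mu$, and invoke uniqueness (Proposition \ref{thm:unique}) to conclude $f_\star\mu_P=\nu_P\ll\mu_P$ for $\widehat\mu$-a.e.\ $P$. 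This is the same mechanism the paper itself uses for the invariance lemma and in Example \ref{examplekhan}, so your argument unifies the non-singular and invariant cases (take $h\equiv 1$), and it sidesteps the uncountable union $\bigcup_{P\in A}M_P$, whose measurability the paper's proof does not address. Two points you flag as obstacles do need to be supplied, and both resolve cleanly. First, Definition \ref{dis} integrates against the quotient of the measure being disintegrated, so you need $\widehat{f_\star\mu}=\widehat\mu$ on $\mathbb P$; this holds because each element of the dynamical partition is fully invariant, $f^{-1}(P)=P$ (from $\tau(f(x),A)=\tau(x,A)$, as you observe), so every saturated set $\widehat\tau^{-1}(B)$ satisfies $\mu(f^{-1}\widehat\tau^{-1}(B))=\mu(\widehat\tau^{-1}(B))$. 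Second, the normalization: integrating $h$ over saturated sets gives $\int_B\bigl(\int_X h\,d\mu_P\bigr)\,d\widehat\mu(P)=\widehat{f_\star\mu}(B)=\widehat\mu(B)$ for every measurable $B\subseteq\mathbb P$, hence $\int_X h\,d\mu_P=1$ for $\widehat\mu$-a.e.\ $P$, so no normalizing factor is actually needed and $\nu_P$ is automatically a probability measure with $\nu_P(P)=1$. With these two observations inserted, your argument is complete and, arguably, tighter than the one in the paper.
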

\begin{proof}
	Let us suppose to the contrary that there exists $A\subset \widehat{\mathcal{A}}$ such that $\widehat{\mu}(A)>0$ and for all $P\in A$, $\mu_P$ is singular with respect to $f$, that is, there exists $M_P\subset X$ for all $P \in A$ such that
	\begin{equation}
	\mu_P(M_P)=0\ \text{but}\  \mu_P(f^{-1}(M_P))>0.
	\end{equation}
	Note that since $\mu_P(P)=1$ for all $P\in \mathbb{P}$, therefore, for $P\in A$, $\mu_P(f^{-1}(M_P))>0$ implies $f^{-1}(M_P)\subset P$, which in turn implies
	\begin{equation}
	M_P\subset f(P) \subset P.
	\end{equation}
	Let $E=\bigcup\limits_{\mu_P\in A}M_P$, so we have for $P\in A$,
	\begin{equation}
	\mu_P(E)=\mu_P(\bigcup\limits_{\mu_P\in A}M_P)=0,
	\end{equation}
	so for $P\in A$, we have
	\begin{equation}
	\mu_P(f^{-1}(E))>0
	\end{equation}
	and since $\mu_P$'s are mutually singular, so for $P\in A^C$, we have
	\begin{equation}
	\mu_P(f^{-1}(E))=0.
	\end{equation}
	Now, using (6), we have $\mu(E)=\int\mu_P(E)d\widehat{\mu}(P)=0$. As $\mu$ is non-singular with respect to $f$, therefore we must have $\mu(f^{-1}(E))=0$. Now,
	\begin{align*}
	\mu(f^{-1}(E))&=\int\mu_P(f^{-1}(E))d\widehat{\mu}(P)
	\\&=\int_{A}\mu_P(f^{-1}(E))d\widehat{\mu}(P) + \int_{A^C}\mu_P(f^{-1}(E))d\widehat{\mu}(P)
	\\&>0
	\end{align*}
	which is a contradiciton to the non-singularity of $\mu$, and hence the result.	  	
\end{proof}
\begin{lemma}
	If $\mu$ is $f$-invariant then so is $\mu_P$ for $\widehat{\mu}$-almost every \(P\).
\end{lemma}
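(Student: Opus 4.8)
The plan is to mimic the uniqueness argument of Example \ref{examplekhan}: I will show that the family of pushforwards $\{f_\star\mu_P : P\in\mathbb{P}\}$ is again a disintegration of $\mu$ with respect to $\mathbb{P}$, and then invoke the uniqueness of the disintegration (Proposition \ref{thm:unique}) to conclude that $f_\star\mu_P=\mu_P$ for $\widehat{\mu}$-almost every $P$, which is exactly the asserted invariance.

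First I would record the key structural property of the dynamical partition: each element $P$ is forward invariant under $f$. Indeed, for every $A\in\mathcal{A}$ the asymptotic frequency is unchanged under one application of $f$, since
\[
\tau(f(x),A)=\liminf_{n\to\infty}\frac{1}{n}\sharp\{1\le i\le n : f^{i}(x)\in A\}=\tau(x,A),
\]
the shift of the index range by one having no effect on the Ces\`aro limit. Hence $x\backsim f(x)$ in the sense of Definition \ref{dp}, so $f(P)\subseteq P$ and therefore $P\subseteq f^{-1}(P)$ for every $P\in\mathbb{P}$.

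With this in hand I would verify the two conditions of Definition \ref{dis} for the family $\{f_\star\mu_P\}$. For condition (1), the invariance just established gives $f_\star\mu_P(P)=\mu_P(f^{-1}(P))\ge\mu_P(P)=1$, so $f_\star\mu_P$ is carried by $P$ for $\widehat{\mu}$-almost every $P$. For condition (2), since $f^{-1}(E)$ is measurable whenever $E$ is, the map $P\mapsto f_\star\mu_P(E)=\mu_P(f^{-1}(E))$ is measurable because $\{\mu_P\}$ is a disintegration; and using the $f$-invariance of $\mu$ together with the disintegration identity applied to $f^{-1}(E)$,
\[
\mu(E)=\mu(f^{-1}(E))=\int_{\mathbb{P}}\mu_P(f^{-1}(E))\,d\widehat{\mu}(P)=\int_{\mathbb{P}}f_\star\mu_P(E)\,d\widehat{\mu}(P).
\]
Thus $\{f_\star\mu_P\}$ disintegrates $\mu$, and Proposition \ref{thm:unique} yields $f_\star\mu_P=\mu_P$ for $\widehat{\mu}$-almost every $P$.

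The one step that requires genuine care, rather than routine bookkeeping, is condition (1): it is precisely here that the specific nature of the dynamical partition enters, through the forward invariance $f(P)\subseteq P$. Everything else is a direct transcription of the invariance argument for $\mu$ to the conditional level via the uniqueness theorem; in particular, no compactness or continuity hypotheses are needed, only that $\widehat{\mathcal{A}}$ has a countable generator so that Proposition \ref{thm:unique} applies.
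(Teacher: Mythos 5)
Your proof is correct and follows essentially the same route as the paper: show that the pushforward family $\{f_\star\mu_P\}$ is again a disintegration of $\mu$ with respect to $\mathbb{P}$ and then invoke the uniqueness of disintegrations to conclude $f_\star\mu_P=\mu_P$ for $\widehat{\mu}$-almost every $P$. The only difference is that you explicitly justify the invariance of the partition elements via $\tau(f(x),A)=\tau(x,A)$, a detail the paper asserts without proof by saying each $P$ is a union of orbits.
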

\begin{proof}
	We need to prove that $\widehat{\mu}$-almost every $\mu_P$ is fixed by the pull-back by $f$, that is, $f_*\mu_P=\mu_P$.
	Every $P$ being the union of some orbits, is an $f$-invariant set. Therefore
	\begin{equation}
	f_*\mu_P(P)=\mu(f^{-1}(P))=\mu(P)=1.
	\end{equation}
	Note that the measurability of the maps $f$ and $P \rightarrow \mu_P(E)$ for every measurable set $E$, imply the measurabikity of the map $P \rightarrow f_*\mu_P(E)=\mu_P(f^{-1}(P))$.
	\\ The $f$-invariance of $\mu$ implies
	\begin{equation}
	\mu(E)= \mu(f^{-1}(E))=\int\mu_P(f^{-1}(E))d\widehat{\mu}(P).
	\end{equation}
	which implies that the family of probability measures $\{f_*\mu_P:P\in \mathbb{P}\}$ also disintegrates $\mu$ with respect to $\mathbb{P}$. By the uniqueness of disintegration for a specific partition, we have that $f_*\mu_P=\mu_P$ for $\widehat{\mu}$-almost every $P\in \mathbb{P}$ and hence the result.
\end{proof}
\


\begin{thebibliography}{00}
\bibitem{RO}Rohlin, V. A., \emph{On the Fundamental Ideas of Measure Theory}, American Mathematical Society Translation, vol. 71. American Mathematical Society, Providence, RI (1952).
\bibitem{W} Viana, M., Oliveira, K., (2016), \emph{Foundations of Ergodic Theory}, Cambridge Studies in Advanced Mathematics, Cambridge: Cambridge University Press.
\end{thebibliography}
\end{document}